\documentclass[12pt]{amsart}
\textwidth=433pt
\evensidemargin=-.0in
\oddsidemargin=-.0in
\topmargin=-.0in
\textheight=640pt

\usepackage{amsmath}
\usepackage{amssymb}
\usepackage{latexsym}

\newcommand{\Zint}{\mathbb {Z}}    
     
\newcommand{\Cplx}{\mathbb {C}}     
\newcommand{\halmos}{\rule{5pt}{5pt}}

\numberwithin{equation}{section}

\newtheorem{prop}{\bf Proposition}[section]
\newtheorem{thm}[prop]{\bf Theorem}
\newtheorem{lemma}[prop]{\bf Lemma}

\newtheorem{cor}[prop]{\bf Corollary}
\newtheorem{conj}{\bf Conjecture}
\newtheorem{exa}{\bf Example}

\setcounter{section}{0}

\begin{document}

\title[Heun's equation, generalized hypergeometric function $\dots $]
{Heun's equation, generalized hypergeometric function and exceptional Jacobi polynomial}
\author{Kouichi Takemura}
\address{Department of Mathematics, Faculty of Science and Technology, Chuo University, 1-13-27 Kasuga, Bunkyo-ku Tokyo 112-8551, Japan.}
\email{takemura@math.chuo-u.ac.jp}
\subjclass[2000]{34M35,33E10,34M55}
\begin{abstract}
We study Heun's differential equation in the case that one of the singularities is apparent.
In particular we propose a conjecture that solutions of Heun's equation in this case also satisfy a generalized hypergeometric equation, which can be described in a more general form, and establish it in some cases.
We demonstrate application of our results to exceptional Jacobi polynomials.
\end{abstract}

\maketitle

\section{Introduction}

The hypergeometric differential equation
\begin{equation}
z(1-z) \frac{d^2y}{dz^2} + \left( \gamma - (\alpha + \beta +1)z \right) \frac{dy}{dz} -\alpha \beta  y=0,
\label{Gauss}
\end{equation}
is one of the most important differential equation in mathematics and physics.
Several properties of the hypergeometric differential equation, i.e. integral representation of solutions, explicit description of monodromy, algebraic solutions, orthogonal polynomials, etc. are studied very well, and they are applied to various problems in mathematics and physics.
The hypergeometric differential equation has three singularities $\{ 0,1,\infty \}$ and it is a canonical form of Fuchsian differential equations of second order with three singularities.
Several generalizations of the hypergeometric differential equation have been studied so far.

A generalization is given by adding regular singularities.
Heun's differential equation is a canonical form of a second-order Fuchsian equation with four singularities, which is given by
\begin{equation}
\frac{d^2y}{dz^2} + \left( \frac{\gamma}{z}+\frac{\delta }{z-1}+\frac{\epsilon}{z-t}\right) \frac{dy}{dz} +\frac{\alpha \beta z -q}{z(z-1)(z-t)} y=0,
\label{Heun}
\end{equation}
with the condition 
\begin{equation}
\gamma +\delta +\epsilon =\alpha +\beta +1, \quad t \neq 0,1. 
\label{Heuncond}
\end{equation}
Heun's differential equation appears in several systems of physics including analysis of black holes (\cite{SL}), quantum mechnics (e.g. Inozemtsev model of type $BC_1$ (\cite{Ino})), AdS/CFT correspondence (\cite{KSY}), crystal transition (\cite{SL}), fluid dynamics (\cite{CH}).
The parameter $q$ is independent from the local exponents and is called an accessory parameter.
Although it is much more difficult to study global structure of Heun's differential equation than that of the hypergeometric differential equation, several special solutions of Heun's differential equation have been investigated.
In the case that one of the regular singularities $\{ 0,1,t,\infty \}$ is apparent, the solutions of Heun's differential equation have integral representations (\cite{TakIT}), and the integral representation is based on integral transformation of Heun's differential equation established by Kazakov and Slavyanov (\cite{KS}) (see also (\cite{SL})).
Note that Shanin and Craster had studied the case of apparent singularity and obtained some other results (\cite{SC}).
On the other hand, if $\alpha \in \Zint _{\leq 0}$ and $q$ is special, then Heun's differential equation has polynomial solutions.
We will investigate the case that Heun's differential equation has polynomial solutions and the regular singularity $z=t$ is apparent.
Propositions on the structure between two conditions will be given in section \ref{sec:polapp}.

Another generalization of the hypergeometric equation is given by increasing the degree of the differential.
Let 
\begin{align}
& _p \!  F_q \left( \begin{array}{c} a_1 , a_2 , \dots ,a_p \\ b_1, b_2, \dots ,b_q \end{array} ; z \right) = \sum _{n=0}^{\infty } \frac{ (a_1)_n  (a_2 )_n \dots (a_p )_n }{ (b_1)_n (b_2)_n \dots (b_q)_n n! }z^n, 
\end{align}
be the generalized hypergeometric function, where $(a)_n =a(a+1)\dots (a+n-1)$.
Then it satisfies the generalized hypergeometric differential equation
\begin{align}
& \left\{ \frac{d}{dz} \left(  z\frac{d}{dz} +b_1 -1\right) \dots \left(  z\frac{d}{dz} +b_q -1\right) - \left(  z\frac{d}{dz} +a_1 \right) \dots \left(  z\frac{d}{dz} +a_p \right) \right\} y=0. 
\label{eq:GHGDE}
\end{align}
In the case $p=2$ and $q=1$, the function $_2 F_1 \: (\alpha , \beta ; \gamma ;z) $ is called Gauss hypergeometric function, and Eq.(\ref{eq:GHGDE}) is just the hypergeometric differential equation.
If $p=q+1$, then the differential equation (\ref{eq:GHGDE}) is Fuchsian with singularities $z=0,1, \infty$, and it is known to be rigid (\cite{Katz}), i.e. there is no accessory parameter in Eq.(\ref{eq:GHGDE}).
Consequently we have integral representations of solutions of the generalized hypergeometric differential equation.

In this paper we study some cases that the generalized hypergeometric differential equation is factorized and Heun's differential equation appears as a factorized component.
Let $L _{a_1, \dots ,a_{q+1} ; b_1 \dots b_q}$ be the monic differential operator of order $q+1$ such that $L _{a_1, \dots ,a_{q+1} ; b_1 \dots b_q}  y=0$ is equivalent to Eq.(\ref{eq:GHGDE}).
For example 
\begin{align}
& L_{a_1, a_2, a_3; b_1, b_2} = \frac{d^3}{dz^3} + \frac{(a _1+a_2 +a_3 +3)z -(b_1+b_2+1)}{z(z-1)} \frac{d^2}{dz^2} \\
& + \frac{(a _1 a_2 +a_1 a _3+ a_2 a_3 +a _1+a_2 +a_3 +1)z- b_1 b_2}{z^2(z-1)} \frac{d}{dz} + \frac{a_1a_2a_3}{z^2(z-1)} . \nonumber 
\end{align}
Letessier, Valent and Wimp (\cite{LVW}) studied generalized hypergeometric differential equations in reducible cases.
They proved that the function
\begin{equation}
{}_{p+r} F_{q+r} \left( \begin{array}{c} a_1 , \dots ,a_p , e_1 +1 , \dots , e_r+1 \\ b_1, b_2, \dots ,b_q , e_1, \dots , e_r \end{array} ; z \right) 
\end{equation}
satisfies a linear differential equation of order $\max (p,q+1)$ whose coefficients are polynomials. 
We now explain it in the case $p=2$, $q=1$, $r=1$.
Let $f_0 (z) = $ $_2 F_1 \: (\alpha , \beta ; \gamma ;z) $.
Then the function $f_1(z)= z f_0'(z)/e_1 + f_0(z)$ is equal to the function  $_{3}  F_{2} \left( \begin{array}{c} \alpha , \beta  , e_1 +1 \\ \gamma ,  e_1 \end{array} ; z \right) $ and it satisfies $L_{\alpha , \beta , e_1 +1 ; \gamma ,e_1} f_1(z) =0 $.
On the other hand, it also satisfies
\begin{align}
& \tilde{L}_{\alpha , \beta ;\gamma ; e_1 } f_1(z)=0,
 \label{eq:e1}
\end{align}
where 
\begin{align}
& \tilde{L}_{\alpha , \beta ;\gamma ; e_1 } = \frac{d^2}{dz^2} +\left( \frac{\gamma }{z} +\frac{\alpha +\beta -\gamma +2}{z-1} -\frac{1}{z-t} \right) \frac{d}{dz}  +\frac{\alpha \beta z -q}{z(z-1)(z-t)} ,\\
& t =\frac{e_1(e_1+1- \gamma )}{(e_1 -\alpha )(e_1-\beta )}, \quad q= \frac{\alpha \beta (e_1+1)(\gamma -e_1-1)}{(e_1-\alpha )(e_1-\beta )} , \nonumber
\end{align}
and we have the factorization
\begin{align}
&  L_{\alpha , \beta , e_1 +1 ; \gamma ,e_1} = \left(  \frac{d}{dz} +\frac{e_1 +1}{z} +\frac{1}{z-1} +\frac{1}{z-t} \right)  \tilde{L}_{\alpha , \beta ;\gamma ; e_1 }. 
\end{align}
Since the point $z=t $ is not singular with respect to the differential equation $L_{\alpha , \beta , e_1 +1 ; \gamma ,e_1} y =0 $, it is an apparent singularity with respect to $\tilde{L}_{\alpha , \beta ;\gamma ; e_1 } y =0 $.
Maier (\cite{Mai}) observed the fact conversely and established that Heun's differential equation with the apparent singularity $z=t$ whose exponents are $0$, $2$ appears as a right factor of the generalized hypergeometric equation $  L_{\alpha , \beta , e_1 +1 ; \gamma ,e_1}y =0$ with a suitable value $e_1$ (see Proposition \ref{prop:Maier}).
On the other hand, Heun's diffenrential equation whose singularity $z=t$ is apparent (and $\epsilon \in \Zint _{\leq -1}$) is solvable in the sense that it admits integral representations of solutions, which we explain in section \ref{sec:inttrans}.
Then it would be natural to ask that a solvale differential equation is related to rigid differential equation (i.e. the generalized hypergeometric equation).
In this paper we generalize Maier's result and propose a conjecture;
\vspace{.1in}\\
{\bf Conjecture.} (Conjecture \ref{conj}.)  {\it Set
\begin{align}
& \tilde{L}  = \frac{d^2}{dz^2} +\left( \frac{\gamma }{z} +\frac{\delta }{z-1} -\sum _{k=1}^M  \frac{m_k}{z-t_k} \right)  \frac{d}{dz} 
+\frac{s_M z^{M} + \dots + s_0 }{z(z-1)(z-t_1)\dots (z-t_M)} ,\nonumber
\end{align}
and assume that $0,1, t_1 , \dots ,t_M $ are distinct mutually, $m_1 ,\dots ,m_M \in \Zint _{\geq 1} $ and the singularities $z= t_k $ of $\tilde{L}  y=0 $ are apparent for $k=1,\dots ,M$.
Then there exists a generalized hypergeometric differential operator $L_{\alpha , \beta , e_1 +1, \dots , e_N+1 ;\gamma , e_1, \dots , e_N} $ $(N=m_1 +\dots +m_M) $ and a differential operator $\tilde{D} $ which admit the factorization}
\begin{align}
&  L_{\alpha , \beta , e_1 +1, \dots , e_N+1 ;\gamma , e_1, \dots , e_N} = \tilde{D} \tilde{L}.
\end{align}
Note that Heun's differential equation whose singularity $z=t$ is apparent and $\epsilon \in \Zint _{\leq -1}$ is included in the situation of the conjecture of the case $M=1$.
The following proposition is an evidence of supporting the conjecture;
\vspace{.1in}\\
{\bf Proposition.} (Proposition \ref{prop:conj}.)  {\it The conjecture is true for the cases $M=1$, $m_1 \leq 5$, $M=2$, $m_1+ m_2 \leq 4$ and $M=3$, $m_1=m_2=m_3=1$.}
\vspace{.1in}

Gomez-Ullate, Kamran and Milson (\cite{GKM}) introduced $X_1$-Jacobi polynomials as an orthogonal system within the Sturm-Liouville theory.
They are remarkable and are stuck out the classical framework because the sequence of polynomials starts from a polynomial of degree one.
Sasaki et al. (\cite{OS,STZ}) extended it to two types of $X_{\ell }$-Jacobi polynomials $(\ell =1,2, \dots )$ and studied properties of them.
It is known that $X_1$-Jacobi polynomials satisfy Heun's differential equation.
We apply results in this paper to $X_1$-Jacobi polynomials.
Then we may understand a position of $X_1$-Jacobi polynomials in the theory of Heun's differential equation.
Moreover we establish that $X_1$-Jacobi polynomials are also expressed by generalized hypergeometric functions.

This paper is organized as follows.
In section \ref{sec:Heunappsing}, we review definitions and properties of Heun's differential equation, apparent singularity and Heun polynomial, which will be well known for specialists.
In section \ref{sec:inttrans}, we recall an integral transformation of Heun's differential equation and its application to the case that singularity $z=t$ is apparent.
In section \ref{sec:GHGEHeun}, we give a conjecture on Heun's differential equation with an apparent singularity and reducible generalized hypergeometric equation, and verify it for some cases.
In section \ref{sec:polapp}, we explain propositions in the case that Heun's differential equation has polynomial solutions and the regular singularity $z=t$ is apparent.
In section \ref{sec:X1Jacobi}, we give applications to $X_1$-Jacobi polynomials.
In appendix, we give an outline of the proof of the conjecture in some cases.

\section{Heun's differential equation, apparent singularity and Heun polynomial} \label{sec:Heunappsing}
We review some facts on a regular singularity and Heun's differential equation, which will be applied in this paper.
\subsection{Local solution}
Let us consider local solutions of Heun's differential equation
\begin{equation}
\frac{d^2y}{dz^2} + \left( \frac{\gamma}{z}+\frac{\delta }{z-1}+\frac{\epsilon}{z-t}\right) \frac{dy}{dz} +\frac{\alpha \beta z -q}{z(z-1)(z-t)} y=0,
\label{eq:Heun}
\end{equation}
$(\gamma +\delta +\epsilon =\alpha +\beta +1)$
about $z=t$.
The exponents about $z=t$ are $0$ and $1-\epsilon $. 
If $\epsilon \not \in \Zint $, then we have a basis of local solutions about $z=t$ as follows;
\begin{align}
& f (z)= 
\displaystyle \sum _{j=0} ^{\infty } c _j (z-t)^{j}, \; (c_0 \neq 0), \quad  g(z)= 
\displaystyle (z-t) ^{1-\epsilon }  \sum _{j=0} ^{\infty } \tilde{c}_j (z-t)^{j}, \; (\tilde{c}_0 \neq 0).
\end{align}
The coefficients $c_i$ are recursively determined by $ \epsilon t (t-1) c_1 +(\alpha \beta t-q )c_0  =0  $ and 
\begin{align}
& i(i+\epsilon -1)t (t-1) c_{i} +(i+\alpha -2)(i+\beta -2)c_{i-2} 
\label{eq:recrel} \\
& +[ (i-1)(i-2)(2t - 1 )+ (i-1)\{ (\gamma +\delta +2 \epsilon )t - \gamma - \epsilon \} + \alpha \beta t -q]c_{i-1} =0, \nonumber
\end{align}
 for  $i \geq 2$ (see \cite{Ron}).
Hence $c_i$ is a polynomial of the variable $q$ of order $i$.

\subsection{Apparent singularity}
If $\epsilon \in \Zint _{\leq 0} $, then we have a basis of local solutions of Heun's differential equation as follows;
\begin{align}
& f (z)= 
\displaystyle \sum _{j=0} ^{\infty }  c _j (z-t)^{j}  + A \: g (z) \log (z-t) , \;
g (z)=
\displaystyle (z-t) ^{1-\epsilon } \sum _{j=0} ^{\infty } \tilde{c} _j(z-a)^{j}, \label{eq:locsoltaint}
\end{align}
If the logarithmic term in Eq.(\ref{eq:locsoltaint}) disappears, i.e. $A =0$, then the singularity $z=t$ is called apparent (see \cite{Ince}).
The terminology "false singularity" was used in  (\cite{SC}).
Note that the apparency of a regular singularity is equivalent to that the monodromy about $z=t$ is trivial i.e. the monodromy matrix is the unit.

Now we describe an explicit condition that the regular singularity $z=t $ of Heun's differential equation
is apparent in the case $\epsilon \in \Zint _{\leq 1}$, which was also studied in (\cite{SC}).
The condition is written as
\begin{align}
& (\alpha -\epsilon -1)(\beta -\epsilon -1)c_{ -\epsilon -1} \\
& +[ \epsilon (\epsilon +1)(2t - 1 ) -\epsilon \{ (\gamma +\delta +2 \epsilon )t - \gamma - \epsilon \} + \alpha \beta t -q]c_{ -\epsilon } =0, \nonumber
\end{align}
where $c_1 , \dots , c _{-\epsilon-1} , c _{-\epsilon}$ are determined recursively by Eq.(\ref{eq:recrel}), and the condition is nothing but the case $i=1 -\epsilon $ of Eq.(\ref{eq:recrel}) (see (\cite{Ince}) for a general theory).
By setting $n=1-\epsilon $, we have $\delta = \alpha + \beta -\gamma +n $ and the condition that the singularity $z=t $ is apparent is written as $P^{\sf app} (q)=0$,
where $P^{\sf app} (q)$ is a polynomial of the variable $q$ of order $n$, which is also a polynomial of $t,\alpha ,\beta, \gamma $.

\begin{exa} 
(i) If $\epsilon =0$ ($n=1$), then the condition that the regular singularity $z=t $ is apparent is written as $P^{\sf app} (q)= q- \alpha \beta t=0$ and it follows that the singularity $z=t$ disappears.\\
(ii) If $\epsilon =-1$ ($n=2$), then the condition that the regular singularity $z=t $ is apparent is written as 
\begin{align}
& P^{\sf app} (q)=q^2-\{(2 \alpha  \beta +\alpha +\beta ) t-\gamma +1\} q+\alpha  \beta  t \{ (\alpha +1) (\beta +1) t-\gamma \}=0 . \label{eq:ep-1q}
\end{align}
(iii) If $\epsilon =-2$ ($n=3$), then the condition that the regular singularity $z=t $ is apparent is written as 
\begin{align}
& P^{\sf app} (q)= q^3+\{ (-3 \alpha  \beta -3 \alpha -3 \beta -1)t+(3 \gamma -4) \} q^2 \label{eq:ep-2q} \\
& \qquad \quad +[ \{ 3 \alpha ^2 \beta ^2+6 \alpha  \beta (\alpha +\beta )+10 \alpha  \beta + 2 (\alpha ^2+ \beta ^2)+2 \alpha +2 \beta \} t^2 \nonumber \\
& \qquad \qquad +\{ (-6 \alpha  \beta -4  \alpha -4  \beta  )\gamma +4 \alpha  \beta +4 \alpha +4 \beta \} t+ 2 (\gamma -1) (\gamma -2) ] q \nonumber \\
& \quad -\alpha  \beta  t\{ (\alpha +1) (\alpha +2) (\beta +1) (\beta +2)t^2 -\gamma  (3 \alpha  \beta +4 \alpha +4 \beta +4)t + 2 \gamma  (\gamma -1) \}=0 .\nonumber 
\end{align}
\end{exa}

\subsection{Heun polynomial}
Heun polynomial is a polynomial (or polynomial-type) solution of Heun's differential equation (see (\cite{Ron}) and references therein).
Polynomial solutions of Fuchsian differential equation have been also studied for a long time (see (\cite{MS}) and related papers for reviews).
Here we review a condition that Heun's differential equation has a non-zero polynomial solutions of degree $N-1$.
Then the solution has an asymptotic $(1/z)^{1-N}$ as $z \rightarrow \infty $ and we have $1-N=\alpha $ or $1-N=\beta $, because the exponents about $z=\infty $ are $\alpha $ and $\beta $.
We now assume that $1-\alpha = N \in \Zint _{\geq 0} $.
If the accessory parameter $q$ satisfies $c_{N}=0$ where $c_{N}$ is determined by Eq.(\ref{eq:recrel}),
then it follows from Eq.(\ref{eq:recrel}) in the case $i=N+1$ that $c_{N+1}=0$.
Thus we have $c_i =0$ for $i \geq N+2$ and we obtain a polynomial solution of degree $N-1$.
The polynomial solution is called Heun polynomial.
Note that the condition $c_{N}=0$ is written as $P^{\sf pol} (q)=0$ by multiplying a suitable constant, where $P^{\sf pol} (q)$ is a monic polynomial of the variable $q$ of order $N$, which is also a polynomial of $t,\beta, \gamma ,\epsilon $. $(\delta =\beta -N-\gamma -\epsilon +2)$
\begin{exa}
(i) If $\alpha =0$ ($N=1$) and $\beta \not \in \Zint $, then the condition for existence of non-zero polynomial solution of Heun's equation is written as $P^{\sf pol} (q)= q=0$ and a polynomial solution is $y=1$.\\
(ii) If $\alpha =-1$ ($N=2$) and $\beta \not \in \Zint $, then the condition for existence of non-zero polynomial solution of Heun's equation is written as
\begin{align}
& P^{\sf pol} (q)= q^2+( (\beta -\epsilon ) t+\gamma  +\epsilon  )q + \beta  \gamma   t =0 , \label{eq:al-1q}
\end{align}
and a polynomial solution is
\begin{align}
& y= t (t-1) \epsilon +(q+ \beta  t) (z-t). \label{eq:al-1y}
\end{align}
(iii) If $\alpha =-2$ ($N=3$) and $\beta \not \in \Zint $, then the condition for existence of non-zero polynomial solution of Heun's equation is written as
\begin{align}
& P^{\sf pol} (q)= q^3+\{ (3 \beta -3 \epsilon -1)t+3 \gamma  +3 \epsilon +2\} q^2  \label{eq:al-2q} \\
& \qquad \qquad  \; \; \; +\{ 2 (\beta -\epsilon ) (\beta -\epsilon  -1) t^2-4(\epsilon ^2 +(\gamma - \beta  +1) \epsilon  -(2 \gamma  +1) \beta  )t \nonumber \\ 
& \qquad \qquad \qquad + 2 (\gamma + \epsilon ) (\gamma +\epsilon  +1) \} q  + 4 \beta \gamma t ((\beta  -\epsilon ) t+\gamma  +\epsilon +1)  =0 , \nonumber
\end{align}
and a polynomial solution is
\begin{align}
& y= 2 t^2 (t-1)^2 \epsilon  (\epsilon  +1) +2 t (t-1) (\epsilon  +1) (q+ 2 \beta  t) (z-t)  \label{eq:al-2y} \\
& \qquad  + \{ q^2 +((3 \beta  -\epsilon  +1) t +\gamma  +\epsilon ) q+ 2 \beta  t ((\beta +1) t+\gamma ) \} (z-t)^2. \nonumber
\end{align}
\end{exa}
Polynomial-type solutions of Heun's differential equation are written as 
\begin{align}
y=z^{\sigma _0} (z-1)^{\sigma _1} (z-t)^{\sigma _t} p(z),
\end{align}
where $p(z)$ is a polynomial, $\sigma _0 \in \{ 0, 1-\gamma \}$, $\sigma _1 \in \{ 0, 1-\delta \}$, $\sigma _t\in \{ 0, 1-\epsilon \}$.
The condition for existing a non-zero polynomial-type solution in the case $\sigma _0 =\sigma _1 =\sigma _t =0 $ is described as $\alpha \in \Zint _{\leq 0}$ (or $\beta \in \Zint _{\leq 0}$) and $P^{\sf pol} (q)=0$.
The condition for existing a polynomial-type solution in the case $\sigma _0 =1-\gamma $, $\sigma _1 =1-\delta $ and $\sigma _t =0$ is described as $\epsilon -\alpha \in \Zint _{\leq -1}$ (or $\epsilon -\beta \in \Zint _{\leq -1}$) and $\tilde{P} (q)=0 $, where  $\tilde{P} (q)$ is a polynomial of the variable $q$ of order $-\epsilon +\alpha $ (or $-\epsilon +\beta $).
Then the order of $p(z)$ is $-\epsilon +\alpha -1 $ (or $-\epsilon +\beta -1$).

\section{Integral transformation and its application} \label{sec:inttrans}

Let $p $ be an element of the Riemann sphere $\Cplx \cup \{\infty \}$ and $\gamma _p$ be a cycle on the Riemann sphere with variable $w$ which starts from a base point $w=o$, goes around $w=p$ in a counter-clockwise direction and ends at $w=o$.
Let $[\gamma _z ,\gamma _p] = \gamma _z \gamma _p \gamma _z ^{-1} \gamma _p ^{-1}$ be the Pochhammer contour.
Kazakov and Slavyanov (\cite{KS}) established that Heun's differential equation admits integral transformations.
\begin{prop} $($\cite{KS,TakMH}$)$ \label{prop:Heunint}
Set 
\begin{align}
& (\eta -\alpha )(\eta -\beta )=0 , \; \gamma '=\gamma  -\eta+1 , \;\delta ' =\delta -\eta +1, \; \epsilon '=\epsilon -\eta +1 , \label{eq:mualbe} \\
& \{ \alpha ' , \beta ' \} = \{ 2-\eta , \alpha +\beta -2\eta +1 \} , \nonumber \\
& q'=q+(1-\eta )(\epsilon +\delta  t+(\gamma  -\eta ) (t+1)). \nonumber
\end{align}
Let $v(w)$ be a solution of 
\begin{equation}
\frac{d^2v}{dw^2} + \left( \frac{\gamma  '}{w}+\frac{\delta '}{w-1}+\frac{\epsilon '}{w-t}\right) \frac{dv}{dw} +\frac{\alpha ' \beta ' w -q'}{w(w-1)(w-t)} v=0.
\label{Heun01}
\end{equation}
Then the function 
\begin{align}
& y(z)= \int _{[\gamma _z ,\gamma _p]} v(w) (z-w)^{-\eta } dw 
\label{eq:inttransHeun}
\end{align}
is a solution of 
\begin{equation}
\frac{d^2y}{dz^2} + \left( \frac{\gamma }{z}+\frac{\delta }{z-1}+\frac{\epsilon }{z-t}\right) \frac{dy}{dz} +\frac{\alpha \beta z -q}{z(z-1)(z-t)} y=0,
\label{Heun02}
\end{equation}
for $p \in \{ 0,1,t,\infty \}$.
\end{prop}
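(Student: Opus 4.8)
\noindent This is the integral transformation of Kazakov--Slavyanov, also recorded in \cite{TakMH}, so one may simply invoke those references; for completeness I sketch the route to a self-contained proof, following the standard method for Euler-type kernels. First I would introduce the Heun operators $L_z$, $L'_w$ of (\ref{Heun02}), (\ref{Heun01}), their polynomial normalizations $\hat L_z=z(z-1)(z-t)L_z$ and $\hat L'_w=w(w-1)(w-t)L'_w$, and the kernel $K(z,w)=(z-w)^{-\eta}$, which satisfies $\partial_z K=-\partial_w K$ and $\partial_z^2 K=\partial_w^2 K$. Differentiating (\ref{eq:inttransHeun}) under the integral sign gives $\hat L_z\, y(z)=\int_{[\gamma_z,\gamma_p]}\bigl(\hat L_z K\bigr)(z,w)\,v(w)\,dw$, with $\hat L_z$ acting only on the explicit $z$-dependence of $K$; the plan is to exhibit this integrand as an exact $w$-derivative.

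To do that I would use $\partial_z K=-\partial_w K$ and the substitution $z=w+(z-w)$, so that $\hat L_z K$ becomes a finite sum of terms of the form $w^a(z-w)^{j-\eta}$, and then look for functions $A(z,w)$ and $\tilde C(z,w)$ with
\begin{align}
\bigl(\hat L_z K\bigr)(z,w)\,v(w)=\frac{\partial}{\partial w}\Bigl(A(z,w)\,v(w)+w(w-1)(w-t)\,\tilde C(z,w)\,v'(w)\Bigr). \nonumber
\end{align}
Expanding the right-hand side and using (\ref{Heun01}) to eliminate $v''$ splits this into a $v'$-coefficient equation, which determines $A$ from $\tilde C$, and a $v$-coefficient equation, which reduces to $(\hat L'_w)^{*}\tilde C=-\hat L_z K$, where $(\hat L'_w)^{*}$ is the formal adjoint of $\hat L'_w$ in $w$. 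Solving this last ODE for $\tilde C=\tilde C(z,w)$ within the class of combinations of powers $(z-w)^{j-\eta}$ with coefficients rational in $w$ is exactly what pins down the primed data: matching the parts supported near $w=0,1,t$ forces $\gamma'=\gamma-\eta+1$, $\delta'=\delta-\eta+1$, $\epsilon'=\epsilon-\eta+1$; matching at $w=\infty$ forces $\{\alpha',\beta'\}=\{2-\eta,\ \alpha+\beta-2\eta+1\}$ together with the solvability constraint $(\eta-\alpha)(\eta-\beta)=0$ (equivalently, $z^{-\eta}$ must be an exponent solution of (\ref{Heun02}) at $z=\infty$, which it is because $y(z)\sim(\mathrm{const})\,z^{-\eta}$ there); and the constant term forces $q'=q+(1-\eta)(\epsilon+\delta t+(\gamma-\eta)(t+1))$. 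With $A$ and $\tilde C$ so determined, $\hat L_z y(z)=\int_{[\gamma_z,\gamma_p]}\partial_w B(z,w)\,dw$ with $B=A v+w(w-1)(w-t)\tilde C v'$.

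It would then remain to check that this boundary integral vanishes (all integrals being understood in the Pochhammer-regularized sense, i.e. a genuinely convergent integral for generic $\eta$, extended by analytic continuation in the parameters). Near the contour, $B$ is multivalued only around $w=z$, where the $(z-w)$-powers contribute the scalar $e^{-2\pi i\eta}$, and around $w=p$, where the chosen local solution $v$ contributes the scalar $e^{2\pi i\rho}$ fixed by its exponent $\rho$ at $w=p$; since scalar monodromies commute, a primitive of $\partial_w B$ returns to its initial value after the commutator loop $[\gamma_z,\gamma_p]=\gamma_z\gamma_p\gamma_z^{-1}\gamma_p^{-1}$, so the integral is $0$. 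For $p=\infty$ I would argue in the same way after the substitution $w\mapsto 1/w$ (or by deforming $\gamma_\infty$ into loops around the finite singularities), using that $B(z,w)$ is a power of $w$ times a convergent series there. Together these give $\hat L_z y(z)=0$, hence $L_z y(z)=0$.

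The step I expect to be the main obstacle is the middle one: expanding $\hat L_z K$ in powers of $z-w$ and solving $(\hat L'_w)^{*}\tilde C=-\hat L_z K$ in closed form, cleanly enough to extract all six primed quantities — in particular to confirm the precise accessory-parameter shift $q'$ — and to see that $(\eta-\alpha)(\eta-\beta)=0$ is exactly the obstruction-vanishing condition. A secondary point that needs care is the justification that differentiation under the integral sign and the integration by parts are legitimate, i.e. the behaviour of $B$ near $w=o$, $w=z$, $w=p$ and $w=\infty$.
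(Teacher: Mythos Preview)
The paper does not supply its own proof of this proposition; it is quoted directly from the cited references \cite{KS,TakMH}. Your opening sentence --- simply invoking those references --- is therefore exactly what the paper does, and your supplementary sketch via the Euler kernel identity and integration by parts is the standard route taken in those sources. One small refinement: in the boundary-term step you assume the monodromy of $v$ about $w=p$ is the scalar $e^{2\pi i\rho}$, which is only guaranteed when $v$ is a local (Frobenius) solution at $p$; but the conclusion holds for an arbitrary solution $v$ as the proposition asserts, because the monodromy of $B$ about $w=z$ is already scalar (all $z$-dependence enters through powers of $z-w$), so the commutator $M_zM_pM_z^{-1}M_p^{-1}$ acts trivially on $B$ regardless of $M_p$, and hence $\int_{[\gamma_z,\gamma_p]}\partial_w B\,dw=0$.
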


It was obtained in (\cite{TakIT}) that polynomial-type solutions of Heun's equation correspond to 
solutions which have an apparent singularity by the integral transformation.
In particular we have the following proposition by setting $\eta =\beta $ in Proposition \ref{prop:Heunint}.
\begin{prop} \label{thm:exprnonlogsol} $($\cite{TakIT}$)$ 
If $\epsilon \in \Zint _{\leq 0} $, $\alpha , \beta , \beta - \gamma , \beta -\delta \not \in \Zint $ and the singularity $z=t$ of Eq.(\ref{Heun02}) is apparent, then there exists a non-zero solution of Eq.(\ref{Heun01}) which can be written as $v(w) = w ^{\beta -\gamma }(w-1) ^{\beta -\delta } h(w)$ where $h(w)$ is a polynomial of degree $-\epsilon $ and the functions
\begin{align}
& y _p(z)  = \int _{[\gamma _z ,\gamma _p]} w ^{\beta -\gamma }(w-1) ^{\beta -\delta } h(w) (z-w)^{-\beta } dw ,
\label{eq:intp01}
\end{align}
$(p=0,1)$ are non-zero solutions of Eq.(\ref{Heun02}).
\end{prop}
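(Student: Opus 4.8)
The plan is to realize the asserted solutions of Eq.(\ref{Heun02}) through the Kazakov--Slavyanov integral transformation of Proposition~\ref{prop:Heunint}, specialized to $\eta=\beta$; then the primed parameters attached to Eq.(\ref{Heun01}) are $\gamma'=\gamma-\beta+1$, $\delta'=\delta-\beta+1$, $\epsilon'=\epsilon-\beta+1$, $\{\alpha',\beta'\}=\{2-\beta,\,\alpha-\beta+1\}$, and $q'=q+(1-\beta)(\epsilon+\delta t+(\gamma-\beta)(t+1))$, so that $q'$ is an affine function of $q$. Once a non-zero solution $v(w)$ of Eq.(\ref{Heun01}) of the shape $w^{\beta-\gamma}(w-1)^{\beta-\delta}h(w)$ with $h$ a polynomial of degree $-\epsilon$ has been produced, Proposition~\ref{prop:Heunint} applied to it yields immediately that each function in Eq.(\ref{eq:intp01}) solves Eq.(\ref{Heun02}), and only the non-vanishing of $y_0$ and $y_1$ remains. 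So the two substantive steps are: (i) construct such a $v$; (ii) show $y_0,y_1\not\equiv 0$.

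For step (i) I would invoke the classification of polynomial-type solutions of Heun's equation recalled in Section~\ref{sec:Heunappsing}, applied to Eq.(\ref{Heun01}) with the labelling $\alpha'=2-\beta$, $\beta'=\alpha-\beta+1$. Since $\epsilon\in\Zint_{\le 0}$ one has $\epsilon'-\alpha'=\epsilon-1\in\Zint_{\le -1}$, and hence Eq.(\ref{Heun01}) admits a polynomial-type solution $w^{1-\gamma'}(w-1)^{1-\delta'}p(w)=w^{\beta-\gamma}(w-1)^{\beta-\delta}p(w)$ with $\deg p=-\epsilon'+\alpha'-1=-\epsilon$ precisely when its accessory parameter $q'$ satisfies a polynomial equation $\tilde P(q')=0$, where $\tilde P$ has degree $-\epsilon'+\alpha'=1-\epsilon$; since $q'=q+\mathrm{const}$ this is a degree $1-\epsilon$ condition on $q$. (That $p$ really has degree $-\epsilon$, rather than smaller degree, and is not identically zero, should be read off from the three-term recursion for its coefficients, using $\alpha,\beta,\beta-\gamma,\beta-\delta\notin\Zint$.) Thus step (i) is equivalent to the claim that, as polynomials in $q$ of the common degree $1-\epsilon$, $\tilde P(q'(q))$ and $P^{\sf app}(q)$ agree up to a non-zero constant, where $P^{\sf app}(q)=0$ is the condition that the singularity $z=t$ of Eq.(\ref{Heun02}) be apparent.

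This agreement is the heart of the matter, and I would obtain it from the transformation together with a genericity argument. Suppose $v=w^{\beta-\gamma}(w-1)^{\beta-\delta}p(w)$ solves Eq.(\ref{Heun01}); being a polynomial times fixed powers of $w$ and $w-1$, $v$ is holomorphic at $w=t$, so the integrand in Eq.(\ref{eq:intp01}) has branch points only at $w=0,1,z,\infty$ and none near $w=t$. Consequently, letting $z$ encircle $t$ carries the Pochhammer contour back to itself and leaves each $y_p$ unchanged: the monodromy of $y_0,y_1$ about $z=t$ is trivial. For generic parameters $y_0$ and $y_1$ are linearly independent --- here the hypotheses $\beta,\beta-\gamma,\beta-\delta\notin\Zint$ reenter, through the non-degeneracy of the relevant connection coefficient --- so they span the solution space of Eq.(\ref{Heun02}), which is then apparent at $z=t$; a closure argument extends this to all parameters, so every zero of $\tilde P(q'(q))$ is a zero of $P^{\sf app}(q)$. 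Since for generic parameters both $\tilde P(q'(q))$ and $P^{\sf app}(q)$ are polynomials in $q$ of degree $1-\epsilon$ with only simple zeros, containment of their zero sets forces $\tilde P(q'(q))=c\,P^{\sf app}(q)$ with $c\neq 0$, first for generic parameters and then, as a polynomial identity, in general. The main obstacle I anticipate is exactly this paragraph: carrying out the Pochhammer-contour monodromy bookkeeping rigorously and securing the genericity inputs (independence of $y_0,y_1$ and simplicity of the zeros). A more pedestrian but robust alternative is to bypass it entirely and verify the identity $\tilde P(q'(q))=c\,P^{\sf app}(q)$ by direct computation from the recursion (\ref{eq:recrel}) and its counterpart for Eq.(\ref{Heun01}).

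For step (ii), with such a $v$ in hand, the non-vanishing of $y_0$ and $y_1$ is the standard non-degeneracy of Pochhammer (Euler-type) integrals: as $z\to p$ the leading term of $y_p(z)$ is the local solution of Eq.(\ref{Heun02}) with the non-trivial exponent at $z=p$ --- exponent $1-\gamma$ for $p=0$, exponent $1-\delta$ for $p=1$ --- times a prefactor assembled from $\Gamma$-function values, which is non-zero exactly because $\beta\notin\Zint$ together with $\beta-\gamma\notin\Zint$ for $p=0$, respectively $\beta-\delta\notin\Zint$ for $p=1$. This finishes the construction; the bookkeeping of local exponents at $0,1,\infty$ used throughout is routine, and is the reason the degree of $h$ comes out to be exactly $-\epsilon$.
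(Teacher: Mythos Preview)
The paper does not supply its own proof of this proposition: it is quoted from \cite{TakIT} and stated without argument, followed only by a remark about relaxing the hypotheses and then Corollary~\ref{cor:HeunHG}. There is therefore no in-paper proof to compare your proposal against.

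On its own merits your outline is a reasonable reconstruction. The identification of the primed parameters under $\eta=\beta$ is correct; the matching of the degree of $\tilde P(q'(q))$ with that of $P^{\sf app}(q)$ (both equal to $1-\epsilon$) is exactly what the polynomial-type solution count in Section~\ref{sec:Heunappsing} gives; and your mechanism for the inclusion of zero sets---holomorphy of $v$ at $w=t$ forces trivial monodromy of each $y_p$ about $z=t$---is the right idea. The genuine work, which you correctly flag, is the genericity input (linear independence of $y_0,y_1$ and simplicity of the zeros of the two polynomials) together with the Pochhammer-contour bookkeeping; the direct-computation fallback you mention is a legitimate alternative. For step~(ii) the $\Gamma$-prefactor argument as $z\to p$ is standard, and the hypotheses $\beta,\beta-\gamma,\beta-\delta\notin\Zint$ are precisely what keep the relevant $\Gamma$-values finite and non-zero.
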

We may drop the condition $\alpha , \beta , \beta - \gamma , \beta -\delta \not \in \Zint $ in Proposition \ref{thm:exprnonlogsol} by replacing to that $h(w)$ is a polynomial of degree no more than $-\epsilon $ and the solutions in Eq.(\ref{eq:intp01}) for $p=0,1$ may be zero.

\begin{cor} \label{cor:HeunHG}
If $\epsilon \in \Zint _{\leq 0} $, $\alpha , \beta , \beta - \gamma , \beta -\delta \not \in \Zint $ and the singularity $z=t$ of Eq.(\ref{Heun02}) is apparent,
then any solutions of Eq.(\ref{Heun02}) can be expressed by a finite sum of hypergeometric functions.
\end{cor}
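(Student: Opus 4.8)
The plan is to make the integral solutions of Proposition~\ref{thm:exprnonlogsol} completely explicit by expanding the polynomial factor and recognising each resulting integral as an Euler representation of a Gauss hypergeometric function.

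Under the hypotheses of the corollary, Proposition~\ref{thm:exprnonlogsol} produces a polynomial $h(w)$ of degree $-\epsilon$ such that $v(w)=w^{\beta-\gamma}(w-1)^{\beta-\delta}h(w)$ solves Eq.(\ref{Heun01}) and the functions $y_p(z)=\int_{[\gamma_z,\gamma_p]}w^{\beta-\gamma}(w-1)^{\beta-\delta}h(w)(z-w)^{-\beta}\,dw$, $p=0,1$, are non-zero solutions of Eq.(\ref{Heun02}). The first step is to write $h(w)=\sum_{k=0}^{-\epsilon}a_k w^k$ when treating $y_0$, and $h(w)=\sum_{k=0}^{-\epsilon}b_k(w-1)^k$ when treating $y_1$, so that each $y_p$ becomes a finite linear combination of integrals of the shape $\int_{[\gamma_z,\gamma_p]}w^{\lambda}(w-1)^{\mu}(z-w)^{-\beta}\,dw$, where $\lambda$ and $\mu$ differ from $\beta-\gamma$, resp.\ $\beta-\delta$, by a non-negative integer at most $-\epsilon$.

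The second step is to evaluate each such integral. For $p=0$ the Pochhammer contour $[\gamma_z,\gamma_0]$ encircles $w=0$ and $w=z$, and the substitution $w=zu$ turns the $k$-th integral into $z^{1-\gamma+k}$ (times a constant) multiplied by a Pochhammer-contour integral $\int u^{\beta-\gamma+k}(1-u)^{-\beta}(1-zu)^{\beta-\delta}\,du$ around $u=0$ and $u=1$, which is, up to an explicit ratio of Gamma functions, the classical Euler integral for ${}_2F_1\big(\delta-\beta,\,\beta-\gamma+k+1;\,k+2-\gamma;\,z\big)$. For $p=1$ the substitution $w=1-(1-z)u$ reduces the corresponding integral to $(1-z)^{1-\delta+k}$ times a ${}_2F_1$ in the variable $1-z$. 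The non-integrality conditions $\beta,\beta-\gamma,\beta-\delta\notin\Zint$ are precisely what keeps these Pochhammer-contour integrals non-degenerate and all Gamma quotients finite (the condition $\alpha\notin\Zint$ was already used in Proposition~\ref{thm:exprnonlogsol} to guarantee the existence of the non-zero polynomial $h$ and the non-vanishing of $y_0,y_1$). Hence $y_0$ and $y_1$ are finite sums of hypergeometric functions, with elementary prefactors $z^{1-\gamma}$, resp.\ $(1-z)^{1-\delta}$.

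Finally I would check that $y_0$ and $y_1$ are linearly independent: near $z=0$ the solution $y_0$ equals $z^{1-\gamma}$ times a function holomorphic and non-vanishing at the origin, so it realises the exponent $1-\gamma$ at $z=0$, whereas $y_1$ generically also carries the holomorphic branch (exponent $0$) there, so the two cannot be proportional; cf.\ \cite{TakIT}. Since Eq.(\ref{Heun02}) has a two-dimensional solution space, every solution is a linear combination of $y_0$ and $y_1$, and is therefore a finite sum of hypergeometric functions, as claimed. I expect the main obstacle to be the bookkeeping in the second step --- pinning down the exact parameters and prefactor of the ${}_2F_1$ attached to each term and verifying that the stated genericity hypotheses are exactly those that keep every integral in the expansion well defined and non-zero --- while the linear independence in the last step is comparatively routine.
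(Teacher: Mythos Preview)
Your approach coincides with the paper's: expand $h$ and identify each resulting Pochhammer integral as a Gauss ${}_2F_1$, then argue that $y_0,y_1$ span the solution space. The identification via the substitutions $w=zu$ and $w=1-(1-z)u$ is fine and slightly more direct than the paper's route (which reinvokes Proposition~\ref{thm:exprnonlogsol} in the degenerate case $\epsilon=0$, $q=\alpha\beta t$ to see that the basic integral $\int_{[\gamma_z,\gamma_p]}w^{\beta-\gamma}(w-1)^{\gamma-\alpha-1}(z-w)^{-\beta}\,dw$ solves the hypergeometric equation, and then matches it to an explicit ${}_2F_1$ by its local exponent).

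The one real gap is your linear-independence step. Saying that $y_1$ ``generically also carries the holomorphic branch'' at $z=0$ is not a proof for the specific parameters at hand; nothing you have written rules out $y_1$ landing entirely in the $z^{1-\gamma}$-branch, and the citation to \cite{TakIT} does not settle it. The paper closes this differently: if $y_0=Cy_1$ for some $C\neq 0$, then $y_0$ is a simultaneous eigenvector of the monodromies about $0$ and $1$, with eigenvalues $e^{-2\pi\sqrt{-1}\gamma}$ and $e^{2\pi\sqrt{-1}(\gamma-\alpha-\beta)}$ (using $\delta\equiv\alpha+\beta+1-\gamma\pmod{\Zint}$ since $\epsilon\in\Zint$). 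Because the monodromy about $t$ is trivial, the monodromy about $\infty$ then acts on $y_0$ by $e^{2\pi\sqrt{-1}(\alpha+\beta)}$, which must equal $e^{2\pi\sqrt{-1}\alpha}$ or $e^{2\pi\sqrt{-1}\beta}$ since the exponents there are $\alpha,\beta$; this forces $\alpha\in\Zint$ or $\beta\in\Zint$, a contradiction. That is where the hypothesis $\alpha,\beta\notin\Zint$ is actually consumed, so you should replace your local-exponent sketch by this monodromy argument (or complete it along the same lines, starting from $y_1\sim(1-z)^{1-\delta}$ at $z=1$).
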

\begin{proof}
It follow from Proposition \ref{thm:exprnonlogsol} in the case $\epsilon =0$ and $q=\alpha \beta t$ that the functions 
\begin{align}
& F_p (z)= \int _{[\gamma _z ,\gamma _p]} w ^{\beta -\gamma }(w-1) ^{\gamma - \alpha -1} (z-w)^{-\beta } dw ,
\end{align}
$(p=0,1)$ are non-zero solutions of hypergeometric differential equation, if $\alpha , \beta , \beta - \gamma , \beta -\delta \not \in \Zint $.
Since $F_0(z) \sim z^{1-\gamma }$ $(z \rightarrow 0)$ and $F_1 (z) \sim (z-1) ^{\gamma -\alpha -\beta }$ $(z \rightarrow 1)$, we have
\begin{align}
& F_0 (z) = d _{\alpha, \beta ,\gamma } z^{1-\gamma } F(\alpha -\gamma +1, \beta -\gamma +1; 2-\gamma ;z) , \\
& F_1 (z) = \tilde{d} _{\alpha, \beta ,\gamma } (1-z) ^{\gamma -\alpha -\beta } F(\gamma -\alpha ,\gamma - \beta ; \gamma -\alpha -\beta +1 ;1-z) , \nonumber 
\end{align}
where $d _{\alpha, \beta ,\gamma } ,\tilde{d} _{\alpha, \beta ,\gamma }  $ are constants.
By expanding $h(w)= \sum _{i=0}  ^{-\epsilon } c' _{i} w^i $ (resp. $h(w)= \sum _{i=0}  ^{-\epsilon } \tilde{c}' _{i} (1-w)^i $) and applying the formula,
the functions $y_0 (z)$ and $y_1(z)$ are written as a finite sum of hypergeometric functions.
If the functions $y_0 (z)$ and $y_1(z)$ are linearly dependent, then $y_0 (z) = C y_1(z)$ for some $C \in \Cplx \setminus \{ 0 \}$, $y_0^{\gamma _0} (z) = e^{-2\pi \sqrt{-1}\gamma } y_0 (z) $, $y_0 ^{\gamma _1} (z) =  e^{2\pi \sqrt{-1}(\gamma -\alpha -\beta )} y_0 (z)$ and $ y_0 ^{\gamma _{\infty }} (z) = y_0 ^{-\gamma _0 -\gamma _1} (z) = e^{2\pi \sqrt{-1}(\alpha +\beta )} y_0 (z)$, where $y_0^{\gamma } (z)$ denotes the analytic continuation along the cycle $\gamma $.
Since the exponents about $z=\infty $ are $\alpha $ and $\beta $, we have $ e^{2\pi \sqrt{-1}(\alpha +\beta )}=e^{2\pi \sqrt{-1}\alpha }$ or $e^{2\pi \sqrt{-1}\beta }$, and it contradicts to $\alpha , \beta \not \in \Zint $.
Therefore the functions $y_0 (z)$ and $y_1(z)$ are linearly dependent, every solution is written as a sum of them, and we have the corollary.
\end{proof}
We describe Proposition \ref{thm:exprnonlogsol} and Corollary \ref{cor:HeunHG} in the case $\epsilon =-2 $ explicitly.
\begin{prop} \label{prop:intrepep-2}
Set $\epsilon =-2 $.
The condition that the singularity $z=t$ of Eq.(\ref{Heun02}) is apparent is written as Eq.(\ref{eq:ep-2q}).
Then there exists a non-zero solution of Eq.(\ref{Heun01}) written as $v(w) =  w ^{\beta -\gamma }(w-1) ^{\beta -\delta }h(w)$ where
\begin{align}
& h(w)= 2 \alpha (\alpha +1) w^2 + 2(\alpha +1) \{ q- \alpha (\beta +2 ) t \} w\\
& +q^2 - \{ (2\alpha \beta +3\alpha +\beta +1)t-\gamma +2 \} q+\alpha t\{ t(\alpha +1)(\beta +1)(\beta +2)- \beta \gamma \}  , \nonumber 
\end{align}
and the functions
\begin{align}
& \int _{[\gamma _z ,\gamma _p]} w ^{\beta -\gamma }(w-1) ^{\beta -\delta } h(w) (z-w)^{-\beta } dw ,
\label{eq:intp010}
\end{align}
$(p=0,1)$ are solutions of Eq.(\ref{Heun02}).
If $\alpha , \beta , \beta - \gamma , \beta -\delta \not \in \Zint $, then the functions in Eq.(\ref{eq:intp010}) are non-zero, 
and every solution of Heun's equation with the condition that $\epsilon =-2$ and the singularity $z=t$ is apparent is written as an appropriate sum of the functions $z^{1-\gamma +k} F(\alpha -\gamma +1, \beta -\gamma +k+1; 2-\gamma +k;z)$ $(k=0,1,2 )$ and $(1-z) ^{\gamma -\alpha -\beta +k} F(\gamma -\alpha +k,\gamma - \beta ; \gamma -\alpha -\beta +k+1 ;1-z) $ $(k=0,1,2)$.
\end{prop}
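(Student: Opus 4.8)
The plan is to specialise Proposition~\ref{thm:exprnonlogsol} and the argument in the proof of Corollary~\ref{cor:HeunHG} to the case $\epsilon=-2$ and make every quantity explicit. The first assertion, that the apparency of $z=t$ is equivalent to Eq.(\ref{eq:ep-2q}), is exactly the case $\epsilon=-2$ already recorded in Section~\ref{sec:Heunappsing}, so nothing needs to be done there.

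Next I would pin down the polynomial $h(w)$. By Proposition~\ref{thm:exprnonlogsol} together with the remark following it, there is a solution of Eq.(\ref{Heun01}) of the form $v(w)=w^{\beta-\gamma}(w-1)^{\beta-\delta}h(w)$ with $\deg h\le-\epsilon=2$, the parameters of Eq.(\ref{Heun01}) being those in~(\ref{eq:mualbe}) with $\eta=\beta$; moreover $\deg h=2$ when $\alpha,\beta,\beta-\gamma,\beta-\delta\notin\Zint$. To find $h$, note that $\beta-\gamma=1-\gamma'$ and $\beta-\delta=1-\delta'$, so the prefactor $w^{\beta-\gamma}(w-1)^{\beta-\delta}$ is a substitution carrying Eq.(\ref{Heun01}) to a Heun-type equation of which $h$ is the local solution analytic at $w=0$. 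Writing $h(w)=a_2w^2+a_1w+a_0$, substituting into this equation and clearing the factor $w(w-1)(w-t)$ produces a polynomial identity in $w$; its coefficients form a homogeneous linear system in $(a_0,a_1,a_2)$ whose solvability with $(a_0,a_1,a_2)\ne 0$ amounts to a single relation among $t,\alpha,\beta,\gamma,q$, which a direct computation identifies, after clearing a constant, with Eq.(\ref{eq:ep-2q}); under that relation the system determines $(a_0,a_1,a_2)$ up to scale, and normalising the leading coefficient to $2\alpha(\alpha+1)$ yields the displayed formula for $h(w)$ (equivalently, one runs the three-term recursion for the Taylor coefficients of $h$, an analogue of Eq.(\ref{eq:recrel}), and imposes termination at degree $2$). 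This explicit coefficient computation — setting up the system, checking its consistency condition is Eq.(\ref{eq:ep-2q}), and solving for $a_0,a_1$ — is the main technical step; I expect it to be tedious but routine. With $h$ determined, Proposition~\ref{thm:exprnonlogsol} gives directly that the Pochhammer integrals in Eq.(\ref{eq:intp010}) solve Eq.(\ref{Heun02}) for $p=0,1$, and are non-zero when $\alpha,\beta,\beta-\gamma,\beta-\delta\notin\Zint$ (since then $\deg h=2$ and $v\not\equiv 0$).

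Finally I would read off the hypergeometric expansion following the proof of Corollary~\ref{cor:HeunHG}. For $p=0$, write $h(w)=\sum_{k=0}^{2}c'_kw^k$; then $y_0(z)$ is the corresponding combination of the integrals $\int_{[\gamma_z,\gamma_0]}w^{\beta-\gamma+k}(w-1)^{\beta-\delta}(z-w)^{-\beta}\,dw$, and by the Pochhammer-contour representation of ${}_2F_1$ each of these is a constant multiple of $z^{1-\gamma+k}F(\alpha-\gamma+1,\beta-\gamma+k+1;2-\gamma+k;z)$, since raising the exponent of $w$ by $k$ shifts the prefactor and the parameters as indicated; hence $y_0(z)$ lies in the span of these functions for $k=0,1,2$. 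Expanding instead $h(w)=\sum_{k=0}^{2}\tilde c'_k(1-w)^k$ and using the Pochhammer contour around $w=1$ puts $y_1(z)$ in the span of $(1-z)^{\gamma-\alpha-\beta+k}F(\gamma-\alpha+k,\gamma-\beta;\gamma-\alpha-\beta+k+1;1-z)$, $k=0,1,2$. The local-monodromy argument at $z=\infty$ from the proof of Corollary~\ref{cor:HeunHG}, which uses $\alpha,\beta\notin\Zint$, shows $y_0$ and $y_1$ are linearly independent, hence span the solution space of Eq.(\ref{Heun02}); therefore every solution is an appropriate sum of the six listed hypergeometric functions, as claimed. The only extra care needed here is the bookkeeping of constants and parameter shifts for each power $w^k$ and $(1-w)^k$, and verifying these constants do not vanish.
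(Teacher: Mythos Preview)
Your approach is correct and is essentially the same as the paper's. The paper does not give a separate proof of this proposition at all: it is introduced with the sentence ``We describe Proposition~\ref{thm:exprnonlogsol} and Corollary~\ref{cor:HeunHG} in the case $\epsilon=-2$ explicitly,'' and the content is simply the specialisation you outline --- invoke the apparency condition from Section~\ref{sec:Heunappsing}, determine $h(w)$ by the recursion/linear system, and then feed the expansion of $h$ into the Pochhammer integrals exactly as in the proof of Corollary~\ref{cor:HeunHG}. Your plan fills in precisely the computations the paper leaves implicit.
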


\section{Generalized hypergeometric equation and Heun's differential equation with an apparent singularity} \label{sec:GHGEHeun}

We propose a conjecture on Fuchsian differential equations which have apparent singularities and generalized hypergeometric equations.
\begin{conj} \label{conj}
Set
\begin{align}
& \tilde{L}  = \frac{d^2}{dz^2} +\left( \frac{\gamma }{z} +\frac{\delta }{z-1} -\sum _{k=1}^M  \frac{m_k}{z-t_k} \right)  \frac{d}{dz} 
+\frac{s_M z^{M} + \dots + s_0 }{z(z-1)(z-t_1)\dots (z-t_M)} ,\nonumber
\end{align}
and assume that $0,1, t_1 , \dots ,t_M $ are distinct mutually, $m_1 ,\dots ,m_M \in \Zint _{\geq 1} $ and the singularities $z= t_k $ of $\tilde{L}  y=0 $ are apparent for $k=1,\dots ,M$.
Then there exists a generalized hypergeometric differential operator $L_{\alpha , \beta , e_1 +1, \dots , e_N+1 ;\gamma , e_1, \dots , e_N}  $ $(N = m_1 +m_2 + \dots + m_M)$ which admits the factorization
\begin{align}
&  L_{\alpha , \beta , e_1 +1, \dots , e_N+1 ;\gamma , e_1, \dots , e_N} = \tilde{D} \tilde{L},
\end{align}
where $\tilde{D} $ is a differential operator of order $N$ whose coefficients are rational functions.
The values $\alpha $ and $\beta $ are determined by $s_M= \alpha \beta $ and $\delta = \alpha + \beta -\gamma + N +1$.
\end{conj}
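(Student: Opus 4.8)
\emph{Towards a proof.} The plan is to reduce the asserted identity to the existence of suitable exponents $e_1,\dots,e_N$, to dispose of the local (Riemann-scheme) conditions first, and then to construct the $e_j$ from the accessory data of $\tilde L$. Write $L=L_{\alpha,\beta,e_1+1,\dots,e_N+1;\gamma,e_1,\dots,e_N}$, so that $L$ is monic of order $N+2$ and $\tilde L$ is monic of order $2$. Euclidean right-division in the ring of differential operators over $\Cplx(z)$ gives $L=Q\tilde L+R$ with $Q$ of order $N$ and $R$ of order at most $1$, and the desired factorization holds, with $\tilde D=Q$, exactly when $R=0$ --- equivalently, when the $2$-dimensional solution space of $\tilde Ly=0$ is contained in the $(N+2)$-dimensional solution space of $Ly=0$. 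The first thing to check, and it holds for every choice of the $e_j$, is that the exponents of $\tilde L$ occur among those of $L$ at each point: at $z=0$ the exponents $0,1-\gamma$ of $\tilde L$ lie in $\{0,1-\gamma,1-e_1,\dots,1-e_N\}$; at $z=1$ the exponent $1-\delta$ of $\tilde L$ equals the anomalous exponent $\gamma-\alpha-\beta-N$ of $L$ --- here the relation $\delta=\alpha+\beta-\gamma+N+1$, which is the Fuchs relation for $\tilde L$, is precisely what is needed --- so $\{0,1-\delta\}\subseteq\{0,1,\dots,N,1-\delta\}$; at $z=\infty$ the exponents $\alpha,\beta$ of $\tilde L$ lie in $\{\alpha,\beta,e_1+1,\dots,e_N+1\}$; and at $z=t_k$, which is an ordinary point of $L$ with exponents $0,1,\dots,N+1$, the exponents $0,1+m_k$ of $\tilde L$ fit since $m_k\le N$. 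Thus the obstruction to $R=0$ is purely ``accessory'' and must be absorbed by the choice of the $e_j$.

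The heart of the matter is to produce the $e_j$. It is convenient to reformulate. By the Letessier--Valent--Wimp construction (see \cite{LVW,Mai} and the introduction), the composition $\mathcal E=\prod_{j=1}^{N}\frac1{e_j}\bigl(z\frac d{dz}+e_j\bigr)$ of first-order parameter-raising operators intertwines the Gauss operator $L_{\alpha,\beta;\gamma}$ with $L$, so $\mathcal E$ carries the solution space of $L_{\alpha,\beta;\gamma}y=0$ into that of $Ly=0$; consequently, if one can choose $\alpha,\beta,\gamma$ (forced by $\alpha\beta=s_M$ and $\alpha+\beta=\gamma+\delta-N-1$) and $e_1,\dots,e_N$ so that the solution space of $\tilde Ly=0$ coincides with the $\mathcal E$-image of that of $L_{\alpha,\beta;\gamma}y=0$, then $\ker\tilde L\subseteq\ker L$ and the factorization follows. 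In other words, the conjecture would follow from the assertion that every $\tilde L$ with the $t_k$ apparent is a parameter-raising of a Gauss operator. For $M=1$ this is essentially the content of Proposition \ref{thm:exprnonlogsol} and Corollary \ref{cor:HeunHG}, which express the solutions of $\tilde Ly=0$ as finite sums of contiguous Gauss functions; the $e_j$, all $m_1$ of them attached to the single apparent point, are then read off. For a fixed small case $(M;m_1,\dots,m_M)$ one proceeds directly by elimination: parametrize $\tilde L$ by its accessory parameters subject to the apparency equations of Section \ref{sec:Heunappsing}, form $R=R(z;e)$, and solve the polynomial system $R=0$ for $e_1,\dots,e_N$. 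This is how I would establish Proposition \ref{prop:conj} (using a computer algebra system for $m_1=4,5$).

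The main obstacle to a uniform proof is carrying out this last step for all $N$ and, above all, for $M\ge2$. The dimension bookkeeping is encouraging --- the apparent operators $\tilde L$ of a given Riemann-scheme type with prescribed positions $t_k$, and the configurations of $(e_1,\dots,e_N)$ producing that shape, form families of equal dimension --- but I see no structural reason forcing the map from the $e_j$ to the accessory data of $\tilde L$ to be \emph{onto} the apparency locus rather than merely dominant, whereas the conjecture as stated needs surjectivity. On the analytic side one would need the extension of Corollary \ref{cor:HeunHG} to several apparent points --- that solutions of $\tilde Ly=0$ with $M\ge2$ apparent singularities are still finite sums of Gauss functions --- together with a decoupling statement that the $N$ parameter-raisings split into $M$ clusters of sizes $m_1,\dots,m_M$ governed independently by the local structure of $\tilde L$ at $t_1,\dots,t_M$. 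A conceptual route would be a middle-convolution / Katz-algorithm argument realizing the rank-$2$ local system underlying $\tilde L$ --- which only sees $\{0,1,\infty\}$, since the $t_k$ are apparent --- as a rank-$2$ subrepresentation of the reducible rank-$(N+2)$ generalized hypergeometric local system of the prescribed Jordan type; but converting that monodromy statement into the precise operator identity, with the apparent singularities at prescribed positions $t_k$ and of prescribed orders $m_k$, is exactly the delicate point, and is the reason the statement is posed as a conjecture.
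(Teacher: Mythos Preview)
Your approach to the verified special cases is essentially the paper's. What you call the Euclidean remainder $R$ is exactly the paper's $w_1(z)\frac{d}{dz}+w_0(z)$: the appendix determines the higher coefficients $v_{N-j}$ of $\tilde D$ recursively from $w_{N+2-j}=0$ (your quotient $Q$), then solves the remaining system $w_0=w_1=0$ for the elementary symmetric functions ${\mathfrak e}_1,\dots,{\mathfrak e}_N$ of the $e_j$, case by case with Maple, using the apparency relations $P^{\sf app}_j=0$ to close the system. The paper does not record the local exponent compatibility you check at $0,1,\infty,t_k$, but that is a useful preliminary and is correct as you state it.

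Your additional discussion toward the general case --- the Letessier--Valent--Wimp intertwiner $\mathcal E$ and the middle-convolution sketch --- goes beyond what the paper attempts; the paper simply states the conjecture and verifies low cases. One caution: the assertion that $\mathcal E=\prod_j\frac{1}{e_j}(z\frac{d}{dz}+e_j)$ carries the \emph{entire} two-dimensional solution space of the Gauss equation into $\ker L$ (equivalently, an operator identity $L\,\mathcal E=\mathcal E'\,L_{\alpha,\beta;\gamma}$) is not what \cite{LVW} actually proves --- their statement concerns the specific series ${}_2F_1$, i.e.\ the holomorphic solution at $z=0$ --- and would itself require a separate argument. Since you do not rely on this for the verified cases, the gap is in the heuristic reformulation, not in anything you claim as proved; but if you intend to pursue the general conjecture along those lines, that intertwining identity is the first thing to nail down.
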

\begin{prop} \label{prop:conj}
Conjecture \ref{conj} is true for the cases $M=1$, $m_1 \leq 5$, $M=2$, $m_1+ m_2 \leq 4$ and $M=3$, $m_1=m_2=m_3=1$.
\end{prop}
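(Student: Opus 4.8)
The plan is to convert Conjecture \ref{conj}, in each of the finitely many configurations named, into an explicit elimination problem and then to solve it.

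\textbf{Step 1: reduction to a vanishing remainder.} Following the conjecture, fix $\{\alpha,\beta\}$ to be the roots of $X^{2}-(\gamma+\delta-N-1)X+s_{M}=0$, so that $\alpha\beta=s_{M}$ and $\delta=\alpha+\beta-\gamma+N+1$; which root is $\alpha$ is immaterial, since $L:=L_{\alpha,\beta,e_{1}+1,\dots,e_{N}+1;\gamma,e_{1},\dots,e_{N}}$ is symmetric in its upper parameters, and moreover $L$ depends on $e_{1},\dots,e_{N}$ only through their elementary symmetric functions $\sigma_{1},\dots,\sigma_{N}$, which we take as the $N$ unknowns. Performing right division of $L$, monic of order $N+2$, by $\tilde{L}$, monic of order $2$, in the ring $\mathbb{C}(z)\langle D\rangle$ of linear differential operators with rational-function coefficients ($D=d/dz$), one gets a unique decomposition $L=\tilde{D}\tilde{L}+R$ with $\tilde{D}$ monic of order $N$, $R=r_{1}(z)D+r_{0}(z)$, and all coefficients rational in $z$ and in the parameters; the $\tilde{D}$ delivered by the division is automatically of the form required in the conjecture. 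Hence, in the case at hand, the conjecture amounts to the solvability of $r_{0}\equiv r_{1}\equiv 0$ for $\sigma_{1},\dots,\sigma_{N}$. (Equivalently, extending the factorization $L_{\alpha,\beta,e_{1}+1;\gamma,e_{1}}=(D+\tfrac{e_{1}+1}{z}+\tfrac{1}{z-1}+\tfrac{1}{z-t})\tilde{L}_{\alpha,\beta;\gamma;e_{1}}$: for every $e$ the operator $L$ right-factors through the second-order operator $\tilde{L}^{(e)}$ annihilating $\theta_{e_{N}}\cdots\theta_{e_{1}}\,{}_{2}F_{1}(\alpha,\beta;\gamma;z)$, $\theta_{e}=(zD+e)/e$, so the conjecture becomes the assertion that every admissible $\tilde{L}$ equals some $\tilde{L}^{(e)}$.)

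\textbf{Step 2: localizing the constraint.} The rational functions $r_{0},r_{1}$ can be singular only at $0,1,t_{1},\dots,t_{M},\infty$. Because $\tilde{L}$ and $L$ share the exponents $\{0,1-\gamma\}$ at $z=0$, the exponent $1-\delta=\gamma-\alpha-\beta-N$ at $z=1$, and the exponent pair $\{\alpha,\beta\}$ at $z=\infty$ — precisely the matchings the forced values of $\alpha,\beta,\delta$ are there to secure — the parts of $r_{0},r_{1}$ supported at $0$, $1$ and $\infty$ either vanish identically or reduce to relations that hold automatically; this structural simplification is confirmed directly in each case. What then remains, in each of the cases at hand, is the vanishing of the principal parts of $r_{0},r_{1}$ at $t_{1},\dots,t_{M}$; using the apparency equations of $\tilde{L}$ at the $t_{k}$ to eliminate the accessory coefficients $s_{0},\dots,s_{M-1}$, this becomes a polynomial system $\mathcal{S}$ for $\sigma_{1},\dots,\sigma_{N}$ with coefficients rational in $\gamma,\delta,t_{1},\dots,t_{M}$, whose solution set is zero-dimensional. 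The apparency hypothesis enters exactly here: it is what renders $\mathcal{S}$ consistent, whereas without it the system is over-determined and has no solution in general. The seed case $N=M=1$ is the known relation $t=e_{1}(e_{1}+1-\gamma)/\{(e_{1}-\alpha)(e_{1}-\beta)\}$.

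\textbf{Step 3: resolution and verification in the listed cases.} For each of $M=1$ with $m_{1}\le 5$, $M=2$ with $m_{1}+m_{2}\le 4$, and $M=3$ with $m_{1}=m_{2}=m_{3}=1$, one carries out the elimination symbolically and obtains $\sigma_{1},\dots,\sigma_{N}$, equivalently the (unordered) set $\{e_{1},\dots,e_{N}\}$, as rational or at worst algebraic functions of $\gamma,\delta,t_{1},\dots,t_{M}$; substituting back confirms $r_{0}\equiv r_{1}\equiv 0$ (which also guards against slips in the elimination), and for a solution in general position the computed $e_{k}$ avoid $\{0,1\}\cup\{\alpha,\beta\}$ so that no parameter coincidence collapses the generalized hypergeometric operator. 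The degenerate loci — two of the $t_{k}$ colliding, two apparent exponents merging, an $e_{k}$ landing on a forbidden value — are handled separately or by continuity. These explicit resolutions are the content of the appendix.

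\textbf{The main obstacle.} The genuine difficulty is uniformity rather than any single case: the size and degree of $\mathcal{S}$ grow with $N=m_{1}+\dots+m_{M}$ and with the combinatorial type $(m_{1},\dots,m_{M})$, so that both the symbolic elimination and — far more seriously — any \emph{a priori} proof that $\mathcal{S}$ is always solvable (say a resultant- or B\'ezout-type argument that the map $e\mapsto\tilde{L}^{(e)}$ surjects onto the admissible $\tilde{L}$'s rather than merely dominating them) lie beyond what we can do at present. This is exactly why the proposition is stated only for small $N$; for the configurations listed the elimination, though lengthy, terminates and produces the required factorization.
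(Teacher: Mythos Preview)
Your proposal is correct and follows essentially the same route as the paper: perform the right division of $L$ by $\tilde L$ in $\mathbb C(z)\langle D\rangle$, reduce the conjecture to the vanishing of the order-one remainder, treat the elementary symmetric functions of the $e_i$ as unknowns, and then resolve the resulting polynomial system case by case with a computer algebra system, using the apparency relations to close the verification. The paper's appendix does exactly this (its $w_1,w_0$ are your $r_1,r_0$, its $\mathfrak e_i$ your $\sigma_i$), with Maple carrying the load.

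Two minor differences worth noting. First, the paper does not eliminate the accessory parameters via the apparency equations as you suggest in Step~2; rather it solves a linear subsystem of the coefficients of $w_1$ for the $\mathfrak e_i$ (these coefficients turn out to be linear in the $\mathfrak e_i$), substitutes back into the remaining coefficients of $w_1$ and all of $w_0$, and only then invokes the apparency relations $P^{\sf app}_j=0$ to kill what is left. Your ordering would work too, but since each apparency equation has degree $m_k+1$ in its accessory parameter, treating them as elimination relations up front introduces algebraic branching that the paper's ordering avoids. Second, your exponent-matching heuristic in Step~2 (that the principal parts at $0,1,\infty$ take care of themselves) is a nice piece of intuition the paper does not articulate; the paper simply lets Maple confirm the full vanishing without separating the singular contributions.
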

We show an outline of the proof of the proposition in appendix.
Note that the calculation is dependent on Maple (a computer algebra system). 
Here we express the generalized hypergeometric differential operators of the conjecture in the cases $M=1$, $m_1 =1,2$.
Set 
\begin{align}
& H_{[\epsilon =-m ]}  = \frac{d^2}{dz^2} +\left( \frac{\gamma }{z} +\frac{\alpha +\beta -\gamma +m+1}{z-1} -\frac{m}{z-t} \right)  \frac{d}{dz}  +\frac{\alpha \beta z -q }{z(z-1)(z-t)} . 
\end{align}

The case $M=1$ and $m_1=1$ is essentially due to Maier (\cite{Mai}).
\begin{prop} $($\cite{Mai}$)$ \label{prop:Maier}
If the singularity $z= t$ of Heun's differential equation $H_{[\epsilon =-1 ]} y=0$ is apparent (see Eq.(\ref{eq:ep-1q})),
then the generalized hypergeometric differential operator $L_{\alpha , \beta , e_1 +1 ;\gamma , e_1}  $ admits the factorization
\begin{align}
&  L_{\alpha , \beta , e_1 +1 ;\gamma , e_1} = \left( \frac{d}{dz}  +\frac{e_1 +1}{z} +\frac{1}{z-1} +\frac{1}{z-t} \right)  H_{[\epsilon =-1 ]} , \\
& e_1 = \frac{q-(\alpha +1)(\beta +1)t+\gamma }{1-t}-1 . \nonumber
\end{align}
\end{prop}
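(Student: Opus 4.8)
The plan is to verify the factorization directly: compose the proposed first‑order operator on the left of $H_{[\epsilon=-1]}$ and then recognize the product from its local data. Put $D=\frac{d}{dz}+\frac{e_1+1}{z}+\frac{1}{z-1}+\frac{1}{z-t}$ with $e_1$ the value in the statement, and $\mathcal{L}=D\,H_{[\epsilon=-1]}$. Then $\mathcal{L}$ is a monic third‑order Fuchsian operator with rational coefficients whose singular points lie in $\{0,1,t,\infty\}$, while $L_{\alpha,\beta,e_1+1;\gamma,e_1}$ is singular only at $\{0,1,\infty\}$; so it suffices to show that (i) the exponents of $\mathcal{L}$ at $0$, $1$, $\infty$ agree with those of $L_{\alpha,\beta,e_1+1;\gamma,e_1}$, (ii) $z=t$ is in fact an ordinary point of $\mathcal{L}$, and (iii) conclude by rigidity that the two operators coincide.

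For (i) I would use that $\ker\mathcal{L}$ is spanned by a basis of $\ker H_{[\epsilon=-1]}$ together with one extra solution $y_3$ satisfying $H_{[\epsilon=-1]}y_3=w_0$, where $w_0$ spans $\ker D$, i.e.\ $w_0$ is a scalar multiple of $z^{-(e_1+1)}(z-1)^{-1}(z-t)^{-1}$. At each singular point the exponents of $\mathcal{L}$ are thus those of $H_{[\epsilon=-1]}$ — which are $\{0,1-\gamma\}$, $\{0,\gamma-\alpha-\beta-1\}$, $\{0,2\}$ and $\{\alpha,\beta\}$ at $0,1,t,\infty$, using $\delta=\alpha+\beta-\gamma+2$ and $1-\epsilon=2$ — together with the exponent of $y_3$. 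Since near a finite singular point $c$ one has $w_0\sim(z-c)^{-r_c}$ with $r_c\in\{e_1+1,1,1\}$, and $H_{[\epsilon=-1]}[(z-c)^a]\sim(z-c)^{a-2}$ whenever $a$ is not already an exponent of $H_{[\epsilon=-1]}$ at $c$, the exponent of $y_3$ comes out as $1-e_1$ at $0$, $1$ at $1$, $1$ at $t$, and $e_1+1$ at $\infty$. This completes the exponent lists to $\{0,1-\gamma,1-e_1\}$, $\{0,1,\gamma-\alpha-\beta-1\}$, $\{0,1,2\}$, $\{\alpha,\beta,e_1+1\}$, which at $0$, $1$, $\infty$ are precisely the exponents of $L_{\alpha,\beta,e_1+1;\gamma,e_1}$.

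Step (ii) is the crux, because the exponents $\{0,1,2\}$ at $z=t$ do not by themselves make it an ordinary point — a logarithm must be excluded. Two of the three local solutions of $\mathcal{L}y=0$ at $z=t$ are local solutions of $H_{[\epsilon=-1]}y=0$, and these are free of logarithms precisely because $z=t$ is assumed apparent for $H_{[\epsilon=-1]}$, which is the hypothesis and is equivalent to Eq.~(\ref{eq:ep-1q}). For $y_3$ I would substitute $y_3=\sum_{j\ge1}a_j(z-t)^j$ into $H_{[\epsilon=-1]}y_3=w_0$: the indicial polynomial $\lambda(\lambda-1+\epsilon)=\lambda(\lambda-2)$ of $H_{[\epsilon=-1]}$ at $z=t$ vanishes at $\lambda=2$, so there is exactly one possible obstruction, at order $(z-t)^0$, and working it out it reads $\frac{\gamma}{t}+\frac{\delta}{t-1}+\frac{\alpha\beta t-q}{t(t-1)}=\frac{e_1+1}{t}+\frac{1}{t-1}$, the right‑hand side being the logarithmic derivative at $z=t$ of $z^{e_1+1}(z-1)$. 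Using $\delta=\alpha+\beta-\gamma+2$ this single condition is equivalent to $e_1=\frac{q-(\alpha+1)(\beta+1)t+\gamma}{1-t}-1$, exactly the prescribed value; and once it holds the coefficients $a_3,a_4,\dots$ are uniquely determined with no logarithm. Hence all three local solutions at $z=t$ are holomorphic with exponents $0,1,2$, so $z=t$ is an ordinary point and $\mathcal{L}$ has singular set $\{0,1,\infty\}$.

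Finally, for (iii): $\mathcal{L}$ and $L_{\alpha,\beta,e_1+1;\gamma,e_1}$ are both monic third‑order Fuchsian operators on the Riemann sphere with singular set $\{0,1,\infty\}$ and the same Riemann scheme, and the generalized hypergeometric equation of type ${}_3F_2$ is rigid — it carries no accessory parameter (\cite{Katz}) — so the two operators coincide, which is the asserted factorization. I expect the main obstacle to be the bookkeeping in step (ii): isolating the single order at which a logarithm could enter $y_3$ and checking that the resulting scalar identity collapses to exactly the displayed formula for $e_1$ (a short residue computation). A secondary point is resonance among the exponents at $0$ or $1$, where genuine logarithms occur when $\gamma$ or $\gamma-\alpha-\beta$ is an integer; but they then occur identically for both operators, so the identification still goes through, or one argues by continuity in the parameters. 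As a more computational alternative I could instead verify $L_{\alpha,\beta,e_1+1;\gamma,e_1}=D\,H_{[\epsilon=-1]}$ by expanding both sides and comparing coefficients of powers of $z$, the comparison reducing precisely to Eq.~(\ref{eq:ep-1q}) together with the stated expression for $e_1$ — the approach underlying the treatment of the higher cases in the appendix.
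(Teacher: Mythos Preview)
Your argument is correct and takes a genuinely different route from the paper. The paper gives no proof of this proposition (attributing it to Maier), but its method for the neighbouring cases --- Theorem~\ref{thm:cp-2HeunGHGE} ``verified by straightforward calculations'' and the appendix proof of Proposition~\ref{prop:conj} --- is exactly the computational alternative you sketch at the end: expand $L_{\alpha,\beta,e_1+1;\gamma,e_1}-\tilde D\,H_{[\epsilon=-1]}$ with $\tilde D$ undetermined and force all coefficients to zero, the apparency relation and the formula for $e_1$ falling out of the resulting linear algebra. Your primary argument instead builds $\ker\mathcal L$ explicitly and identifies $\mathcal L$ with the ${}_3F_2$ operator via its local data. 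This buys conceptual clarity: the displayed value of $e_1$ appears naturally as the unique obstruction to solving $H_{[\epsilon=-1]}y_3=w_0$ holomorphically at $z=t$, and the two hypotheses --- apparency of $z=t$ for $H_{[\epsilon=-1]}$, and the choice of $e_1$ --- are seen to govern log-freedom of the two pieces of $\ker\mathcal L$ at $z=t$ separately. The paper's direct computation, by contrast, is what generalises mechanically to $m_1\le5$ and $M\ge2$. One point in your step (iii) deserves tightening: for monic third-order Fuchsian operators on $\{0,1,\infty\}$ the exponent list alone, even with $\{0,1,s\}$ at $z=1$, still leaves one accessory parameter; uniqueness also needs the pseudo-reflection (no-log) structure at $z=1$, equivalently that $p_0$ has only a simple pole there. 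Your construction does supply this --- the exponent-$0$ Heun solution and your $y_3$ are both holomorphic at $z=1$ --- but it should be stated explicitly rather than absorbed into the word ``rigid'', particularly since Katz rigidity strictly concerns irreducible local systems and the present one is reducible by design.
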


Correctness of the conjecture in the cases $M=1$, $m_1 =2$ follows from the following theorem, which is verified by straightforward calculations.
\begin{thm} \label{thm:cp-2HeunGHGE}
If the singularity $z= t$ of Heun's differential equation written as $H_{[\epsilon =-2 ]} y=0$ is apparent (see Eq.(\ref{eq:ep-2q})), then there exists a generalized hypergeometric differential operator $L_{\alpha , \beta , e_1 +1 , e_2 +1 ;\gamma , e_1 ,e_2 }  $ which admits the factorization
\begin{align}
& L_{\alpha , \beta , e_1 +1 , e_2 +1 ;\gamma , e_1 ,e_2 } = \left( \frac{d^2}{dz^2}  +\left( \frac{e_1 +e_2 +3}{z} +\frac{2}{z-1} +\frac{2}{z-t} \right) \frac{d}{dz} +v(z)   \right)  H_{[\epsilon =-2 ]} ,
\end{align}
such that
\begin{align}
& v(z)= [(e_1+3) (e_2+3) z^2+\{ q -((e_1+1) (e_2+1)+(\alpha +2) (\beta +2)) t \label{eq:M1m12e1e2} \\
& \qquad -(e_1+3) (e_2+3)+2 (\gamma +1)\} z +t (e_1+1) (e_2+1)]/\{ z^2 (z-1) (z-t) \} , \nonumber \\
& e_1+e_2= -3+\frac{q-(\alpha +2) (\beta +2) t+2 \gamma }{(1-t)},\nonumber \\
&  e_1 e_2=\frac{1}{2(t-1)^2} [ q^2 - \{  (2\alpha \beta +3\alpha+3\beta  +1) t - (3\gamma -4) \}q \nonumber \\
& \qquad \qquad + ( \alpha ^2 \beta ^2 +3\alpha ^2 \beta +3\alpha \beta ^2+7\alpha \beta +2 \alpha ^2 +2\beta ^2+2\alpha  +2\beta  ) t^2 \nonumber \\
& \qquad \qquad + \{ 2\alpha \beta +4\alpha +4\beta -\gamma ( 3\alpha \beta  +4\alpha  +4 \beta)  \} t + 2(\gamma -1) (\gamma -2) ] . \nonumber 
\end{align}
\end{thm}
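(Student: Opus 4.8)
The plan is to prove the statement by a direct computation: one writes both sides of the asserted identity explicitly and compares the coefficients of $d^{\,j}/dz^{j}$ for $j=0,1,2,3,4$, the apparent-singularity condition $P^{\sf app}(q)=0$ of Eq.(\ref{eq:ep-2q}) entering only at the final step.

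First I would write out the fourth-order operator $L:=L_{\alpha ,\beta ,e_1+1,e_2+1;\gamma ,e_1,e_2}$ explicitly; it is the monic operator for which $Ly=0$ is Eq.(\ref{eq:GHGDE}) with $p=4$, $q=3$, $(a_1,a_2,a_3,a_4)=(\alpha ,\beta ,e_1+1,e_2+1)$ and $(b_1,b_2,b_3)=(\gamma ,e_1,e_2)$. Since the coefficient of $d^{4}/dz^{4}$ in Eq.(\ref{eq:GHGDE}) then equals $z^{3}(1-z)$, one has $L=\dfrac{d^{4}}{dz^{4}}+\sum_{j=0}^{3}P_j(z)\dfrac{d^{\,j}}{dz^{j}}$ with each $P_j$ rational in $z$, having poles only at $z=0,1$, polynomial in $\alpha ,\beta ,\gamma $ and symmetric in $(e_1,e_2)$, hence polynomial in $\alpha ,\beta ,\gamma ,e_1+e_2,e_1e_2$. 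Next I would expand the product $DH$, where $H:=H_{[\epsilon =-2]}=\dfrac{d^{2}}{dz^{2}}+b_1(z)\dfrac{d}{dz}+b_0(z)$ and $D=\dfrac{d^{2}}{dz^{2}}+a_1(z)\dfrac{d}{dz}+v(z)$ with $a_1(z)=\dfrac{e_1+e_2+3}{z}+\dfrac{2}{z-1}+\dfrac{2}{z-t}$; a direct computation gives
\begin{align}
DH={}&\frac{d^{4}}{dz^{4}}+(a_1+b_1)\frac{d^{3}}{dz^{3}}+(a_1b_1+v+b_0+2b_1')\frac{d^{2}}{dz^{2}} \nonumber\\
&+(a_1b_1'+a_1b_0+v\,b_1+b_1''+2b_0')\frac{d}{dz}+(v\,b_0+a_1b_0'+b_0'') . \nonumber
\end{align}

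The comparison is then carried out coefficient by coefficient, always substituting $\delta =\alpha +\beta -\gamma +3$. The $d^{3}/dz^{3}$ coefficients agree automatically: the residues of $a_1+b_1$ at $z=0$ and $z=1$ are consistent with the exponents of $L$ there, and the residue at $z=t$ is $2+\epsilon=0$ (as $\epsilon=-2$), consistent with $z=t$ being an ordinary point of $Ly=0$; likewise the double pole of $a_1b_1+2b_1'$ at $z=t$ cancels, so $v(z)$ may have at worst a simple pole there, exactly as in Eq.(\ref{eq:M1m12e1e2}). Requiring the remaining three coefficients to agree identically in $z$ then forces, in turn, $v(z)$ to be the expression in Eq.(\ref{eq:M1m12e1e2}), the values of $e_1+e_2$ and $e_1e_2$ to be the rational functions of $q,t$ stated in the theorem, and --- in the coefficients of $d/dz$ and of the identity --- the condition $P^{\sf app}(q)=0$: after substituting the prescribed $e_1+e_2,e_1e_2$ and clearing denominators, each of these last two equalities becomes an identity between polynomials in $z$ whose coefficients are polynomials in $\alpha ,\beta ,\gamma ,q,t$, and every such coefficient turns out to be divisible by $P^{\sf app}(q)$. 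Since $L$ is symmetric in $(e_1,e_2)$, only these two symmetric combinations ever occur, so no ordering or branch choice is involved. Once $L=DH$ is proved, the displayed factorization and the inclusion of the solution space of $H_{[\epsilon =-2]}y=0$ in that of $Ly=0$ follow at once.

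The obstacle here is not conceptual but one of bulk: $L$ and the two residual polynomial identities carry coefficients that are large polynomials in the five parameters $\alpha ,\beta ,\gamma ,q,t$, so the final divisibility by $P^{\sf app}(q)$ is verified with a computer algebra system (Maple), as the paper records. The point that needs care is the bookkeeping: at each order one must separate the relations that hold identically (those fixing $e_1+e_2$, $e_1e_2$ and $v(z)$) from those that persist as genuine constraints on $q$, and check that the latter collapse to exactly $P^{\sf app}(q)=0$ rather than to a stronger system --- this is what lets $q$, equivalently the pair $(e_1,e_2)$, vary in a one-parameter family. A conceivable alternative is to start instead from the integral representation of Proposition \ref{prop:intrepep-2}, which already displays the solutions of $H_{[\epsilon =-2]}y=0$ as explicit finite combinations of six hypergeometric functions, and to check that these six functions all lie in the kernel of one common fourth-order generalized hypergeometric operator; but identifying that operator involves a computation of comparable size, so the direct verification above seems the most efficient route.
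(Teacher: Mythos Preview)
Your proposal is correct and matches the paper's own approach: the paper states that the theorem ``is verified by straightforward calculations,'' and the appendix sketches exactly the procedure you describe (write $L-\tilde{D}H$ as $\sum w_j(z)\,d^j/dz^j$, kill the top coefficients recursively to fix $v(z)$ and then ${\mathfrak e}_1=e_1+e_2$, ${\mathfrak e}_2=e_1e_2$, and finally verify with Maple that the residual $w_1(z),w_0(z)$ vanish modulo $P^{\sf app}(q)=0$). Your additional remarks on the cancellation of the $z=t$ poles and on the separation of identically-satisfied relations from genuine constraints are helpful elaborations of the same computation.
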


\section{Polynomial-type solutions with an apparent singularity} \label{sec:polapp}
If $\epsilon \in \Zint _{\leq 0}$, then the condition that $z=t$ is apparent is written as $P ^{\sf app}(q)= 0$, where $P ^{\sf app}(q)$ is  monic polynomial of $q$ with degree $1 -\epsilon $.
On the other hand, if $\alpha \in \Zint _{\leq 0}$ and $\beta \not \in \Zint _{\leq 0} $, then the condition that Eq.(\ref{Heun02}) has a polynomial solution is written as $P ^{\sf pol}(q)= 0$, where $P^{\sf pol} (q)$ is a monic polynomial of $q$ with degree $1 -\alpha $, and the degree of the polynomial solution of Eq.(\ref{Heun02}) is $-\alpha $.
In this section we investigate a relationship of equations $P ^{\sf app}(q)=0 $ and $P ^{\sf pol}(q) =0 $ in the case $\epsilon \in \Zint _{\leq 0}$ and  $\alpha \in \Zint _{\leq 0}$
\begin{lemma} \label{lem:red}
Assume that $\alpha \in \Zint $, $\epsilon \in \Zint $ and the singularity $z=t$ is apparent.
Then the monodromy representation of solutions of Eq.(\ref{Heun02}) is reducible.
\end{lemma}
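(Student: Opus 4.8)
The plan is to work directly with the monodromy representation. Because the singularity $z=t$ is apparent, the local monodromy matrix $M_t$ about $z=t$ is the identity (as recalled in Section~\ref{sec:Heunappsing}, this is exactly the meaning of apparency, and it is meaningful precisely because $\epsilon\in\Zint$). Hence the monodromy representation of Eq.(\ref{Heun02}) factors through $\pi_1(\mathbb{P}^1\setminus\{0,1,\infty\})$ and is generated by the two matrices $M_0$ and $M_1$, with $M_\infty=(M_0M_1)^{-1}$. It therefore suffices to exhibit a common eigenvector of $M_0$ and $M_1$.

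Next I would read off the local data. The exponents of Eq.(\ref{Heun02}) at $z=0$ are $0$ and $1-\gamma$, so the eigenvalues of $M_0$ are $1$ and $g:=e^{-2\pi\sqrt{-1}\gamma}$; likewise the eigenvalues of $M_1$ are $1$ and $d:=e^{-2\pi\sqrt{-1}\delta}$. Since $\alpha\in\Zint$, the exponent $\alpha$ at $z=\infty$ contributes the eigenvalue $e^{2\pi\sqrt{-1}\alpha}=1$ to $M_\infty$, so $1$ is an eigenvalue of $M_0M_1=M_\infty^{-1}$; as $\det(M_0M_1)=gd$, the eigenvalues of $M_0M_1$ are $1$ and $gd$. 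Consequently $\operatorname{tr}M_0=1+g$, $\operatorname{tr}M_1=1+d$ and $\operatorname{tr}(M_0M_1)=1+gd$, and these relations hold uniformly, including in the non-semisimple (resonant) cases where $\gamma$, $\delta$ or $\beta$ happens to lie in $\Zint$.

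Now I would pass to $SL_2(\Cplx)$: choosing square roots $g^{1/2},d^{1/2}$ and setting $A:=g^{-1/2}M_0$ and $B:=d^{-1/2}M_1$, one has $A,B\in SL_2(\Cplx)$ with $\operatorname{tr}A=g^{1/2}+g^{-1/2}$, $\operatorname{tr}B=d^{1/2}+d^{-1/2}$ and $\operatorname{tr}(AB)=(gd)^{1/2}+(gd)^{-1/2}$. Substituting into the classical identity $\operatorname{tr}(ABA^{-1}B^{-1})-2=(\operatorname{tr}A)^2+(\operatorname{tr}B)^2+(\operatorname{tr}AB)^2-\operatorname{tr}A\,\operatorname{tr}B\,\operatorname{tr}(AB)-4$ valid for $A,B\in SL_2(\Cplx)$, a one-line computation shows that the right-hand side vanishes — indeed it is the value of this expression on the commuting diagonal pair $\operatorname{diag}(g^{1/2},g^{-1/2})$, $\operatorname{diag}(d^{1/2},d^{-1/2})$, for which the commutator is trivial. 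Hence $\operatorname{tr}(ABA^{-1}B^{-1})=2$, and for matrices in $SL_2(\Cplx)$ this is equivalent to $A$ and $B$ having a common eigenvector. Rescaling does not affect invariant lines, so $M_0$ and $M_1$ share an eigenvector and the monodromy representation of Eq.(\ref{Heun02}) is reducible.

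I do not expect a serious obstacle. The points that need care are: (i) invoking the characterization of apparency as $M_t=I$, which uses $\epsilon\in\Zint$ and is recorded in Section~\ref{sec:Heunappsing}; (ii) checking that the trace relations above, and the equivalence ``$\operatorname{tr}[A,B]=2\iff$ common eigenvector'', apply also when one of $M_0,M_1,M_\infty$ is unipotent rather than diagonalizable, which they do verbatim; and (iii) quoting the commutator--trace identity for $SL_2(\Cplx)$. A more conceptual alternative would be to observe that under the hypotheses the triple $(M_0,M_1,M_\infty)$ realizes exactly the local monodromy data of a Gauss hypergeometric equation carrying the resonance $\alpha\in\Zint$, whose monodromy is reducible; but the trace computation is self-contained and avoids appealing to rigidity in a possibly non-semisimple situation.
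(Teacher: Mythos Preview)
Your argument is correct. Both you and the paper start identically: apparency of $z=t$ forces $M_t=I$, so the monodromy is generated by $M_0,M_1$ with $M_0M_1=M_\infty^{-1}$, and $\alpha\in\Zint$ gives $1$ as an eigenvalue of $M_0M_1$; the task is then to produce a common eigenvector of $M_0$ and $M_1$. The paper finishes by an explicit matrix computation: putting $M^{(0)}=\operatorname{diag}(1,g)$ and $M^{(1)}=P\operatorname{diag}(1,d)P^{-1}$, the eigenvalue-$1$ condition on the product becomes $qr(d-1)(g-1)=0$, whence a common invariant line; the cases $\gamma\in\Zint$ or $\delta\in\Zint$ (non-diagonalizable $M^{(0)}$ or $M^{(1)}$) are then treated separately via Jordan forms. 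You instead normalize to $SL_2(\Cplx)$ and invoke the Fricke trace identity to obtain $\operatorname{tr}[A,B]=2$, which is the standard reducibility criterion for two-generator subgroups of $SL_2(\Cplx)$. Your route is a little more conceptual and has the advantage of treating the diagonalizable and unipotent cases uniformly, whereas the paper's computation is entirely elementary and self-contained, requiring no appeal to the commutator identity or the $SL_2$ reducibility criterion. Either way the content is the same observation that the numerical constraint coming from $\alpha\in\Zint$ forces the two local monodromies to share an invariant line.
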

\begin{proof}
Let $y_1(z)$, $y_2(z)$ be a basis of solutions of Eq.(\ref{Heun02}).
Since the singularity $z=t$ is apparent, the monodromy matrix around $z=t$ is a unit matrix.
Let $M^{(p)}$ $(p=0,1,\infty )$ be the monodromy matrix on the cycle around the singularity $w=p$ anti-clockwise with respect to the basis $y_1(z)$, $y_2(z)$.
For the moment we assume that $\gamma ,\delta \not \in \Zint$.
Then $M^{(0)}$ (resp. $M^{(1)}$) is conjugate to the diagonal matrix with eigenvalues $1$ and $e^{2\pi \sqrt{-1} \gamma } $ (resp.  $1$ and $e^{2\pi \sqrt{-1} \delta } $).
Since the exponents about $z=\infty $ are $\alpha $, $\beta $ and we have the relation $M^{(0)} M^{(1)}  =(M^{(\infty )})^{-1}$, the matrix $M^{(0)} M^{(1)} $ has an eigenvalue $1$.
Then the matrices $M^{(0)} $ and $M^{(1)} $ have an common invariant one-dimensional subspace, because if we set 
\begin{align}
M^{(0)} = \left( 
\begin{array}{cc}
1 & 0 \\
0 & g
\end{array}
\right) , \;
M^{(1)} = P \left( 
\begin{array}{cc}
1 & 0 \\
0 & d
\end{array}
\right) P^{-1}, \; 
P= \left( 
\begin{array}{cc}
p & q \\
r & s
\end{array}
\right) ,
\end{align}
the condition that $M^{(0)} M^{(1)} $ has an eigenvalue $1$ is written as $0=1-{\rm tr}(M^{(0)} M^{(1)}  ) +\det (M^{(0)} M^{(1)} )=   qr(d-1)(g-1)/(qr-ps) $, and we have an common one-dimensional eigenspace for the case $q=0$, $r=0$, $d=1$ or $g=1$ respectively.
In the case $\gamma \in \Zint $ (resp. $\delta \in \Zint $), the matrix $M^{(0)}$ (resp. $M^{(1)}$) has the multiple eigenvalue $1$, and we can also show that the matrices $M^{(0)} $ and $M^{(1)} $ have an common invariant one-dimensional subspace by expressing the matrices in the form of Jordan normal forms.
Hence the monodromy representation of solutions of Eq.(\ref{Heun02}) is reducible.
\end{proof}
Remark that Lemma \ref{lem:red} is also a consequence of the multiplicative Deligne-Simpson problem for a special case (\cite{Katz}).
\begin{prop} \label{prop:red}
Assume that $\alpha \in \Zint $, $\epsilon \in \Zint _{\leq 0}$ and the singularity $z=t$ is apparent. Set $n=-\epsilon ( \in \Zint _{\geq 0})$.\\
(i) If $\alpha >0 $, then there exists a non-zero solution $y(z)$ such that $y(z)= z ^{1-\gamma }  (z-1) ^{1-\delta } h(z) $ and $h(z)$ is a polynomial of degree no more than $\alpha +n-1$.\\
(ii) If $\alpha <1-n $, then there exists a non-zero solution $y(z)$ such that $y(z)$ is a polynomial of degree no more than $-\alpha $.\\
(iii) If $1-n \leq \alpha \leq 0$, then there exists a non-zero solution $y(z)$ such that $y(z)$ is a polynomial of degree $-\alpha $ or there exists a non-zero solution $y(z)$ such that $y(z)= z ^{1-\gamma }  (z-1) ^{1-\delta } h(z) $ and $h(z)$ is a polynomial of degree no more than $\alpha +n-1$.
\end{prop}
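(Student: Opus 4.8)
The plan is to leverage the reducibility of the monodromy proved in Lemma~\ref{lem:red}. Since the monodromy representation of Eq.(\ref{Heun02}) is reducible it has a one-dimensional invariant subspace, so there is a non-zero solution $y_1(z)$ which is a simultaneous eigenvector of the local monodromy matrices at $0,1,t,\infty$; such an eigenvector carries no logarithm at any of these points and has there a well-defined local exponent $\sigma_p$. Thus $\sigma_0\in\{0,1-\gamma\}$, $\sigma_1\in\{0,1-\delta\}$, $\sigma_\infty\in\{\alpha,\beta\}$, and --- since $z=t$ is apparent and $\epsilon=-n$, so that both local solutions at $z=t$ are holomorphic with exponents $0$ and $n+1$ --- $\sigma_t\in\{0,n+1\}$.

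First I would set $g(z)=z^{-\sigma_0}(z-1)^{-\sigma_1}(z-t)^{-\sigma_t}\,y_1(z)$. Being single-valued on $\Cplx\setminus\{0,1,t\}$ and holomorphic (in fact non-vanishing) at $0,1,t$, the function $g(z)$ is entire, and near $z=\infty$ it is asymptotic to a non-zero constant times $z^{-S}$ with $S:=\sigma_0+\sigma_1+\sigma_t+\sigma_\infty$; hence $g(z)$ is a polynomial, $S\in\Zint_{\le 0}$, $\deg g=-S$, and $y_1(z)=z^{\sigma_0}(z-1)^{\sigma_1}(z-t)^{\sigma_t}g(z)$. Recalling $\delta=\alpha+\beta-\gamma+n+1$, equivalently $(1-\gamma)+(1-\delta)=1-\alpha-\beta-n$, I would then split on $\sigma_\infty$. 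When $\beta,\gamma,\delta,\gamma-\beta\notin\Zint$: if $\sigma_\infty=\beta$, the condition $S\in\Zint$ forces $\sigma_0=1-\gamma$, $\sigma_1=1-\delta$, so $S=1-\alpha-n+\sigma_t$ and $y_1(z)=z^{1-\gamma}(z-1)^{1-\delta}h(z)$ with $h(z)=(z-t)^{\sigma_t}g(z)$ of degree $\alpha+n-1$, which is possible only when $\alpha\ge 1-n$; if $\sigma_\infty=\alpha$, it forces $\sigma_0=\sigma_1=0$, so $y_1(z)=(z-t)^{\sigma_t}g(z)$ is a polynomial of degree $-\alpha$, which is possible only when $\alpha\le 0$. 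Consequently $\alpha>0$ excludes the second alternative and yields (i); $\alpha<1-n$ excludes the first and yields (ii); and $1-n\le\alpha\le 0$ admits either and yields (iii).

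The main obstacle is the resonant parameter values, i.e.\ those for which one of $\beta,\gamma,\delta,\gamma-\beta$ lies in $\Zint$; these genuinely arise, since only $\alpha\in\Zint$ is assumed in Proposition~\ref{prop:red}. Then a local monodromy at $0,1$ or $\infty$ may be non-semisimple, or a further singularity may become apparent, so the invariant line supplied by Lemma~\ref{lem:red} need not have the tuple $(\sigma_0,\sigma_1,\sigma_t,\sigma_\infty)$ isolated above. I would treat these subcases by the Jordan-form analysis already used in the proof of Lemma~\ref{lem:red}: when a local monodromy is truly non-semisimple the invariant line is unique and its exponents are determined directly, and when an extra apparent singularity occurs there are several invariant lines, one of the required shape; in every subcase the now-integral factors $z^{1-\gamma}$ or $(z-1)^{1-\delta}$ are absorbed into the polynomial part and the degree bounds $-\alpha$, respectively $\alpha+n-1$, are re-checked. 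This bookkeeping is routine but is the step that has to be done with care.
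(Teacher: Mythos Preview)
Your argument for generic parameters is correct and matches the paper's: both invoke Lemma~\ref{lem:red} to produce a one-dimensional monodromy-invariant subspace, write the corresponding solution as $z^{\sigma_0}(z-1)^{\sigma_1}(z-t)^{\sigma_t}g(z)$ with $g$ a polynomial (the paper cites \cite[Proposition~3.1]{TakIT} for this step, while you derive it directly from single-valuedness and the growth at $\infty$), absorb the factor $(z-t)^{\sigma_t}$ into the polynomial part, and then use the integrality constraint on $S=\sigma_0+\sigma_1+\sigma_t+\sigma_\infty$ under the hypothesis $\beta,\gamma,\beta-\gamma\notin\Zint$ to force $(\sigma_0,\sigma_1)\in\{(0,0),(1-\gamma,1-\delta)\}$ and read off the degree.

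The one substantive difference is the handling of the resonant parameter values. You propose a subcase-by-subcase Jordan-form analysis; this can be made to work, but as you note it requires care (when a local monodromy becomes unipotent the invariant line need not carry the exponent you want, and absorbing integral powers of $z$ or $z-1$ into the polynomial factor shifts the degree bookkeeping). The paper instead dispatches all of these cases at once by a continuity argument: the monic characteristic polynomial in $q$ for the existence of a polynomial-type solution of each shape $(\theta_0,\theta_1)=(0,0)$ or $(1-\gamma,1-\delta)$ depends polynomially on $\beta,\gamma$, and the roots of the monic apparency polynomial $P^{\sf app}(q)$ vary continuously with $\beta,\gamma$; since the desired vanishing holds on the dense set of generic $(\beta,\gamma)$, it persists in the limit. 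This is considerably shorter than the case split you outline and avoids the Jordan bookkeeping entirely.
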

\begin{proof}
Assume that $\beta \not \in \Zint$, $\gamma \not \in \Zint $ and $\beta -\gamma \not \in \Zint $ for the moment.
It follows from reducibility of monodromy that there exists a non-zero solution $y(z)$ of Eq.(\ref{Heun02}) such that $y(z)= z ^{\theta _0} (z-1) ^{\theta _1} (z-t) ^{\theta _t} h(z) $ such that $h(z)$ is a polynomial, $h(0) h(1) h(t) \neq 0$, $\theta _0 \in \{ 0, 1-\gamma  \}$, $\theta _1 \in \{ 0, 1-\delta  \}$, $\theta _t \in \{ 0, 1+n \}$, and $\alpha =-\deg h(z) - \theta _0-\theta _1 -\theta _t$ or $\beta =-\deg h(z) - \theta _0-\theta _1 -\theta _t$ (see \cite[Proposition 3.1]{TakIT}).
Since $ n \in \Zint _{\geq 0}$ and $(z-t)^n$ is a polynomial in $z$, we have a non-zero solution $y(z)$ of Eq.(\ref{Heun02}) such that $y(z)= z ^{\theta _0} (z-1) ^{\theta _1} h(z) $ such that $h(z)$ is a polynomial, $h(0) h(1) \neq 0$, $\theta _0 \in \{ 0, 1-\gamma  \}$, $\theta _1 \in \{ 0, 1-\delta  \}$, and $\alpha =-\deg h(z) - \theta _0-\theta _1$ or $\beta =-\deg h(z) - \theta _0-\theta _1$.
Because $\deg h(z)$ is a non-negative integer, the possible cases under the consition $\alpha \in \Zint $, $\beta \not \in \Zint$, $\gamma \not \in \Zint $ and $\beta -\gamma = \delta -n -1- \alpha \not \in \Zint $ are the cases $\deg h(z) =-\alpha \in \Zint _{\geq 0}$ ($\alpha \leq 0 $, $(\theta _0,\theta _1) =(0,0)$) and $\deg h(z) =\alpha +n -1 \in \Zint _{\geq 0}$ ($\alpha \geq 1-n $, $(\theta _0,\theta _1) =(1-\gamma , 1-\delta )$).
Hence we have the proposition under the condition $\alpha \in \Zint $, $\beta \not \in \Zint$, $\gamma \not \in \Zint $ and $\beta -\gamma = \delta -n -1- \alpha \not \in \Zint $.

Since the monic characteristic polynomial in $q$ for existence of polynomial-type solutions $y(z)= z ^{\theta _0}  (z-1) ^{\theta _1 } h(z) $ ($h(z)$: a polynomial, $(\theta _0, \theta _1 ) =(0,0), \; (1-\gamma ,1-\delta )$) is continuous with respect to the parameters $\beta $ and $\gamma $, we obtain the proposition for all $\beta $ and $\gamma $ by continuity argument.
\end{proof}
\begin{thm} \label{thm:polapp}
Assume that $\epsilon \in \Zint _{\leq 0}$, $\alpha \in \Zint _{\leq 0}$ and $\beta \not \in \Zint _{\leq 0} $.\\
(i) If $-\alpha \leq -\epsilon $ and Heun's differential equation (Eq.(\ref{Heun02})) has a polynomial solution (i.e. the accessory parameter $q$ satisfies $P ^{\sf pol}(q)= 0$), then the singularity $z=t$ is apparent (i.e. $P ^{\sf app}(q)= 0$).\\
(ii) If $-\epsilon \leq -\alpha $ and the singularity $z=t$ is apparent (i.e. $P ^{\sf app}(q)= 0$), then Eq.(\ref{Heun02}) has a polynomial solution (i.e. $P ^{\sf pol}(q)= 0$).
\end{thm}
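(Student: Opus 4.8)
The plan is to recast both implications as statements about the local solutions of Eq.(\ref{Heun02}) near $z=t$, feeding in the description of polynomial solutions from Section \ref{sec:Heunappsing} together with Proposition \ref{prop:red}. Throughout I would write $n=-\epsilon\in\Zint_{\geq 0}$, so that the exponents of Eq.(\ref{Heun02}) at $z=t$ are $0$ and $1-\epsilon=n+1$; I would also record at the outset that, since $\beta\notin\Zint_{\leq 0}$, any non-zero polynomial solution of Eq.(\ref{Heun02}) must have exponent $\alpha$ (not $\beta$) at $z=\infty$, hence degree exactly $-\alpha$.

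For (i) I would argue as follows. Assume $P^{\sf pol}(q)=0$; by Section \ref{sec:Heunappsing} there is then a non-zero polynomial solution $y_1(z)$ of Eq.(\ref{Heun02}), of degree $-\alpha\leq n$ by the hypothesis $-\alpha\leq-\epsilon$ of (i). Being a polynomial, $y_1$ is single-valued near $z=t$. Suppose $z=t$ were not apparent. Then in the local basis $f(z)=\sum_{j\geq 0}c_j(z-t)^j+A\,g(z)\log(z-t)$, $g(z)=(z-t)^{n+1}\sum_{j\geq 0}\tilde c_j(z-t)^j$ with $c_0,\tilde c_0\neq 0$, one has $A\neq 0$, so the monodromy around $z=t$ fixes only the line $\Cplx g$; hence every non-zero single-valued local solution at $z=t$ is proportional to $g$ and so vanishes at $z=t$ to order $n+1$. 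But $y_1$ is such a solution and, being a non-zero polynomial of degree $-\alpha\leq n$, it vanishes at $z=t$ to order at most $n<n+1$ --- a contradiction. Therefore $z=t$ is apparent, i.e. $P^{\sf app}(q)=0$.

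For (ii) I would invoke Proposition \ref{prop:red}. Assume $P^{\sf app}(q)=0$, i.e. $z=t$ is apparent. The hypothesis $-\epsilon\leq-\alpha$ of (ii) gives $\alpha\leq-n<1-n$, so Proposition \ref{prop:red}(ii) (applied with this $n$) produces a non-zero polynomial solution of Eq.(\ref{Heun02}). By the characterisation of polynomial solutions recalled in Section \ref{sec:Heunappsing} --- valid since $\alpha\in\Zint_{\leq 0}$ and $\beta\notin\Zint_{\leq 0}$ --- the existence of such a solution is equivalent to $P^{\sf pol}(q)=0$, which is the conclusion. (As a sanity check, when $-\alpha=-\epsilon$ both implications predict $P^{\sf pol}=P^{\sf app}$, and indeed for $\alpha=\epsilon=-1$ this is confirmed by Eq.(\ref{eq:ep-1q}) and Eq.(\ref{eq:al-1q}).)

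The one point that needs care is the local claim used in (i): at a non-apparent regular singularity whose exponents are $0$ and $n+1$, the single-valued local solutions form exactly the line spanned by the Frobenius solution of the larger exponent. This comes out of writing the monodromy in the $(f,g)$ basis and computing its fixed vectors; it is purely local, so it needs no hypothesis on $\gamma,\delta$ (nor on $\alpha-\beta\in\Zint$), and the boundary case $-\alpha=-\epsilon$ needs no separate treatment because (i) uses only $-\alpha\leq n<n+1$ and (ii) uses only $\alpha<1-n$. Apart from that, the argument is just the dictionary between "$P^{\sf pol}(q)=0$"/"$P^{\sf app}(q)=0$" and "has a polynomial solution"/"$z=t$ is apparent" set up in Section \ref{sec:Heunappsing}, together with Proposition \ref{prop:red} for part (ii).
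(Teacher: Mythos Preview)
Your proof is correct and follows essentially the same route as the paper. For (ii) both you and the paper simply invoke Proposition \ref{prop:red}(ii); for (i) both arguments rest on the same observation---a non-zero polynomial of degree $-\alpha\le-\epsilon$ cannot coincide with the local solution at $z=t$ of exponent $1-\epsilon$, so it must pick up the exponent-$0$ branch and force $A=0$---with the only cosmetic difference that you phrase this by contradiction (``if $A\neq0$ then the single-valued line has exponent $n+1$'') while the paper phrases it directly (``$p(z)$ is proportional to the exponent-$0$ solution, hence $A=0$'').
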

\begin{proof}
(ii) follows from Proposition \ref{prop:red} (ii).

We show (i).
If $\epsilon \in \Zint _{\leq 0}$, then a basis of local solutions about $z=t$ is written as
\begin{align}
& f(z)=
(z-t) ^{1-\epsilon } \sum _{j=0} ^{\infty } c _j (z-t)^{j}, 
& g(z)= 
\sum _{j=0} ^{\infty } \tilde{c}_j (z-t)^{j} 
+ A f (z) \log (z-t).
\end{align}
Apparency of the singularity $z=t$ is described as the condition $ A=0 $.
If there exists a polynomial solution $y=p(z)$ of Eq.(\ref{Heun02}), then $\deg _z p(z ) = -\alpha \leq -\epsilon $.
Since the expansion of $f(z)$ starts from $1 -\epsilon  $, the solution $p(z)$ is proportional to $g (z) $. 
Hence $ A =0 $ and we obtain (i).
\end{proof}
Proposition \ref{prop:red} and Theorem \ref{thm:polapp} are also valid for the case the singularity $z=0$ or $z=1$ is apparent.

\section{$X_1 $ Jacobi polynomial} \label{sec:X1Jacobi}

We now review a definition of $X_1$-Jacobi polynomials and their properties (\cite{GKM,OS,STZ}).
Let $P_k(\eta ) $ be the Jacobi polynomial parametrized as
\begin{equation}
P_k (\eta ) =\frac{(g+\frac{1}{2})_k }{k!} \sum_{j=0}^k \frac{(-k)_j (k+g+h+2)_j}{j! (g+\frac{1}{2})_j} \left( \frac{1-\eta }{2} \right) ^j.
\end{equation}
The $X_1$-Jacobi polynomials $\hat {P}_k (\eta ) $ $(k=0,1,2,\dots ) $ are defined in the case $g,h \not \in \{ -1/2, -3/2, -5/2, \dots \}$ by
\begin{align}
& \hat{P}_k (\eta ) = \frac{1}{k+h+\frac{1}{2}} \left( {\textstyle (h+\frac{1}{2} )} \tilde{\xi } (\eta ) P_k(\eta ) +(1+\eta ) \xi (\eta ) \frac{d}{d\eta } P_k (\eta ) \right) , \label{eq:DEX1J} \\
& \xi (\eta )= \frac{g-h}{2} \eta+\frac{g+h+1}{2}, \quad  \tilde{\xi } (\eta )= \frac{g-h}{2} \eta+\frac{g+h+3}{2}. \nonumber 
\end{align}
Hence $\deg _{\eta } \hat{P}_k (\eta ) =k+1$.
The $X_1$-Jacobi polynomials in the case $g,h >-1/2$ are orthogonal with respect to the following inner product;
\begin{align}
& \int _{-1} ^1 \hat{P}_k (\eta ) \hat{P}_{k'} (\eta ) {\mathcal W}(\eta ) d\eta = C_k \delta _{k,k'} , \quad {\mathcal W}(\eta ) = \frac{(1-\eta )^{g+\frac{1}{2}}(1+\eta )^{h+\frac{1}{2}}}{2^{g+h+2} \xi (\eta )^2} ,
\end{align} 
where $C_k$ is a non-zero constant.
The $X_1$-Jacobi polynomial $\hat {P}_k (\eta ) $ satisfies the following differential equation;
\begin{align}
& (1-\eta ^2) \frac{d^2}{d\eta ^2} \hat {P}_k (\eta ) + \left( h-g-(g+h+3)\eta -2 \frac{(1-\eta ^2) \xi '(\eta ) }{\xi (\eta )} \right) \frac{d}{d\eta } \hat {P}_k (\eta ) \\
& +\left( -\frac{2(h+\frac{1}{2})(1-\eta ) \tilde{\xi }' (\eta ) }{\xi (\eta )} +k(k+g+h+2)+g-h \right) \hat {P}_k (\eta )=0  . \nonumber 
\end{align}
By setting $\eta =1-2z$ and $y=\hat {P}_k (\eta ) $, we obtain a specific case of Heun's differential equation whose parameters are given by
\begin{align}
& \alpha  = - k-1 , \; \beta  = k + g + h + 1, \; \gamma  = g + 3/2, \; \delta = \alpha +\beta - \gamma +3 =  h  + 3/2 , \label{eq:specX1JHeun} \\
&  \epsilon = -2, \; t=\frac{1 -\gamma }{\alpha +\beta -2 \gamma +3 } =\frac{g +1/2}{g-h} , \nonumber \\ 
& q=\frac{(1 -\gamma ) (\alpha  \beta +2 \alpha +2 \beta -2 \gamma +4) }{\alpha +\beta -2 \gamma +3 } = \frac{(g +1/2)}{h-g}\{ k^2 +(g+h+2)k +g -h \}  . \nonumber
\end{align}
The condition that the singularity $z=t$ of the differential equation 
\begin{equation}
\frac{d^2y}{dz^2} + \left( \frac{\gamma }{z}+\frac{\alpha +\beta - \gamma +3 }{z-1}-\frac{2 }{z-t}\right) \frac{dy}{dz} +\frac{\alpha \beta z -q}{z(z-1)(z-t)} y=0, 
\label{Heun02ep-2}
\end{equation}
is apparent is written as Eq.(\ref{eq:ep-2q}).
By substituting $t=(1 -\gamma )/(\alpha +\beta -2 \gamma +3 )$ into Eq.(\ref{eq:ep-2q}), we have the factorization
\begin{align}
& \left( q + \frac{(\gamma -1) (\alpha  \beta +2 \alpha +2 \beta -2 \gamma +4) }{\alpha +\beta -2 \gamma +3 } \right) \cdot \\
& \left( q^2 - \frac{4\gamma ^2 -(2 \alpha  \beta +4 \alpha +4 \beta +12) \gamma +(2 \alpha  \beta +5 \alpha +5\beta + 9 )  }{\alpha +\beta -2 \gamma +3 }q \right. \nonumber \\
& \qquad \left. - \frac{\alpha  \beta ( \gamma -1) (4\gamma ^2 -( \alpha  \beta +4 \alpha +4 \beta +8) \gamma +( \alpha +1)( \beta + 1 ) }{(\alpha +\beta -2 \gamma +3 )^2} \right) =0. \nonumber
\end{align}
Hence the singularity $z=t=(1 -\gamma )/(\alpha +\beta -2 \gamma +3 )$ is apparent with respect to the second order differential equation which $X_1$-Jacobi polynomial $\hat {P}_k (1-2z ) $ satisfies. 
It follows from Proposition \ref{prop:intrepep-2} that the differenitial equation (\ref{eq:DEX1J}) admits integral representation of solutions written as 
\begin{align}
&  \int _{[\gamma _{\eta } ,\gamma _p]} \{ (k +1) (g-h) (1- \zeta ) ^2  + (2g+1)(2k+2h+3) \zeta \}  \cdot \qquad \\
&  \qquad \qquad (1 -\zeta ) ^{k + h -1/2}(1 +\zeta ) ^{k + g -1/2} (\eta -\zeta )^{-( k + g + h + 1) } d\zeta  , \nonumber 
\end{align}
where $p=-1,1$.

Next we investigate the condition that Eq.(\ref{Heun02ep-2}) has a non-zero polynomial solution under the assumption that the singularity $z=t$ is apparent (see Eq.(\ref{eq:ep-2q})).
If $\alpha = -k-1$, $\beta \neq 0$ and $k \in \Zint _{\geq 1}$, then it follows from Theorem \ref{thm:polapp} that the differential equation has a non-zero polynomial solution.
If $\alpha = -1$ $(k=0)$, then the condition that the differential equation has a non-zero polynomial solution is written as Eq.(\ref{eq:al-1q}) and we have 
\begin{align}
(q-1+\gamma ) \left( q- \frac{\beta \gamma}{\beta -2\gamma +2} \right) =0.
\end{align}
by substituting $t=(1 -\gamma )/(-1 +\beta -2 \gamma +3 )$ and $\epsilon =-2$.
On the other hand we have $q=1-\gamma$ by Eq.(\ref{eq:specX1JHeun}) in the case $k=0$ $(\alpha =-1)$.
Hence we confirm that there exists a non-zero polynomial which corresponds to the $X_1$-Jacobi polynomial in the case $k=0$.
If $\alpha = 0$, then the condition that the differential equation has a non-zero polynomial solution is written as $q=0$ and a solution is constant, and it does not agree with Eq.(\ref{eq:specX1JHeun}), i.e. $q=2(1-\gamma )(\beta -\gamma +2)/(\beta -2\gamma +3 )$.
Hence the constant does not belong to parameters of Heun's differential equation concerning to $X_1$-Jacobi polynomials.
It follows from apparency of the singularity $z=t=(1 -\gamma )/(\beta -2 \gamma +3 )$ and Proposition \ref{prop:red} that there exists a non-zero solution $y(z) $ of Heun's differential equation with the parameters in Eq.(\ref{eq:specX1JHeun}) such that $y(z)= z^{1-\gamma } (z-1)^{1-\delta } h(z)$, $\deg h(z)=1$ and the polynomial $h(z)$ is calculated as $h(z)= (\beta -2\gamma +3 )z + \gamma -2$.

It follows from Theorem \ref{thm:cp-2HeunGHGE} and apparency of the singularity $z=t$ 
that the polynomial 
$\hat {P}_k (1-2z ) $ also satisfies the generalized hypergeometric differential equation $L_{-k-1 , k+g+h+1 , e_1 +1 , e_2 +1 ;g+3/2 , e_1 ,e_2 } y=0 $, where
\begin{align}
& e_1+e_2= 2 \gamma -3 = 2g, \\
& e_1 e_2 = \frac{\alpha  \beta  (\gamma -1)}{-\gamma +2+\alpha +\beta }  = \frac{-(k+1)(k + g + h + 1)(2g+1)}{2h+1} . \nonumber
\end{align}
Thus we have the following proposition;
\begin{thm}
The $X_1$-Jacobi polynomials are expressed in terms of generalized hypergeometric functions,
\begin{align}
& \hat {P}_k (\eta ) = D_k \cdot _4 \! F _3 \left( \begin{array}{c}  - k-1, k + g + h + 1 , e_1 +1,  e_2 +1  \\ g+3/2, e_1, e_2 \end{array} ; \frac{1-\eta }{2} \right), \label{eq:X1J4F3} \\
& e_1+e_2= 2g, \quad e_1 e_2  = \frac{-(k+1)(k + g + h + 1)(2g+1)}{2h+1}, \label{eq:X1J4F3e1e2}
\end{align}
where $D_k$ is a non-zero constant. 
\end{thm}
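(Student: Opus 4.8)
The plan is to combine Theorem \ref{thm:cp-2HeunGHGE} with the rigidity of the generalized hypergeometric equation. First I would recall, as already set up in the discussion preceding the theorem, that the substitution $\eta = 1-2z$, $y = \hat P_k(\eta)$ converts the differential equation (\ref{eq:DEX1J}) satisfied by $\hat P_k$ into the special case of Heun's equation (\ref{Heun02ep-2}) with the parameters (\ref{eq:specX1JHeun}), and that for these parameters the singularity $z=t$ is apparent (this is the factorization of $P^{\sf app}(q)$ displayed above). Hence Theorem \ref{thm:cp-2HeunGHGE} applies with $\alpha=-k-1$, $\beta=k+g+h+1$, $\gamma=g+3/2$: there is a second-order operator with rational coefficients such that $L_{\alpha ,\beta ,e_1+1,e_2+1;\gamma ,e_1,e_2} = \tilde D\, H_{[\epsilon =-2]}$, where $e_1+e_2$ and $e_1 e_2$ are given by (\ref{eq:M1m12e1e2}). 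Substituting the values (\ref{eq:specX1JHeun}) into those two symmetric functions and simplifying yields $e_1+e_2 = 2g$ and $e_1 e_2 = -(k+1)(k+g+h+1)(2g+1)/(2h+1)$, i.e.\ (\ref{eq:X1J4F3e1e2}); this is a direct computation. Applying $\tilde D$ to $\hat P_k(1-2z)$, which is annihilated by $H_{[\epsilon =-2]}$, gives $L_{-k-1,\,k+g+h+1,\,e_1+1,\,e_2+1;\,g+3/2,\,e_1,\,e_2}\,\hat P_k(1-2z)=0$.

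The remaining task is to identify $\hat P_k(1-2z)$ inside the solution space of this operator. Since here $p=4=q+1$, the equation $Ly=0$ is the $_4F_3$ generalized hypergeometric equation, which is rigid Fuchsian with singular points $0,1,\infty$; as the upper parameter $-k-1$ is a negative integer, the generalized hypergeometric series on the right-hand side of (\ref{eq:X1J4F3}), viewed as a function of $z=(1-\eta)/2$, terminates and is a polynomial (of degree $k+1$ for generic $g,h$, consistent with $\deg_{\eta}\hat P_k(\eta)=k+1$), and it is the Frobenius solution at $z=0$ belonging to the exponent $0$. The other exponents at $z=0$ are $1-\gamma=-g-\tfrac12$, $1-e_1$, $1-e_2$, which are not non-negative integers for generic $g,h$; since $\hat P_k(1-2z)$ is a polynomial, hence holomorphic at $z=0$, it must then lie in the one-dimensional space spanned by the $_4F_3$, and comparing values at $z=0$ gives $\hat P_k(1-2z)=\hat P_k(1)\cdot{}_4F_3(\dots ;z)$, so $D_k=\hat P_k(1)$. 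Returning to the variable $\eta$ yields (\ref{eq:X1J4F3}); in the orthogonal regime $g,h>-1/2$ the explicit evaluation $\hat P_k(1)=\dfrac{(g+\frac12)_k}{k!\,(k+h+\frac12)}\bigl[(h+\tfrac12)(g+\tfrac32)+k(k+g+h+2)\bigr]$ is manifestly non-zero, so $D_k\neq 0$, and the identity extends to the general range of $g,h$ by rationality in the parameters.

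The main obstacle is not the passage from Heun's equation to $_4F_3$ — that is supplied by Theorem \ref{thm:cp-2HeunGHGE} — but the bookkeeping needed to make the identification rigorous: one must ensure the $_4F_3$ series is well defined and has the expected degree (i.e.\ none of $g+3/2,e_1,e_2$ is a non-positive integer and none of $k+g+h+1,e_1+1,e_2+1$ lies in $\{0,-1,\dots,-k\}$), treating degenerate parameter values by a continuity argument, and one must confirm that $\hat P_k(1-2z)$ really is the exponent-$0$ solution at $z=0$, which is exactly the non-vanishing of $\hat P_k(1)$, hence $D_k\neq0$. An alternative that makes the last point automatic is to compare at $z=\infty$ instead: the exponents there are $-k-1,\,k+g+h+1,\,e_1+1,\,e_2+1$, so for generic $g,h$ the only polynomial degree that can occur is $k+1$ and the corresponding polynomial solution is unique up to a non-zero scalar, which both $\hat P_k(1-2z)$ and the terminating $_4F_3$ realize.
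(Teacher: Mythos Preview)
Your argument is correct and follows essentially the same route as the paper: both show via Theorem \ref{thm:cp-2HeunGHGE} that $\hat P_k(1-2z)$ satisfies the $_4F_3$ equation with the computed $e_1,e_2$, and then identify it with the exponent-$0$ solution at $z=0$ for generic parameters, extending by continuity. You add some welcome detail the paper omits, namely the explicit evaluation $D_k=\hat P_k(1)$ and the verification that it is non-zero, as well as the alternative identification at $z=\infty$.
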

\begin{proof}
Let $ \hat {Q}_k (\eta ) $ be the generalized hypergeometric function defined by the right hand side of Eq.(\ref{eq:X1J4F3}).
Then the functions $ \hat {Q}_k (1-2z ) $ and $\hat {P}_k (1-2z ) $ are  holomorphic solutions of the generalized hypergeometric differential equation 
\begin{equation}
L_{-k-1 , k+g+h+1 , e_1 +1 , e_2 +1 ;g+3/2 , e_1 ,e_2 } y=0 
\end{equation}
about $z=0$, where $e_1$ and $e_2$ are given by Eq.(\ref{eq:X1J4F3e1e2}).
The exponents of the differential equation about $z=0$ are $0$, $-g-1/2$, $1-e_1$ and $1 -e_2$.
If $g+1/2, e_1 , e_2 \not \in \Zint$, then the dimension of holomorphic solutions of the differential equation is one and the function $ \hat {P}_k (1-2z ) $ is proportional to $\hat {Q}_k (1-2z ) $.
By continuity argument, the function $ \hat {P}_k (1-2z ) $ is proportional to $\hat {Q}_k (1-2z ) $ in the case $g,h \not \in \{ -1/2, -3/2, -5/2, \dots \}$, the case that the functions are well-defined. 
\end{proof}

\section{Concluding remarks}
It is known that two types of $X_{\ell }$-Jacobi polynomials $(\ell =1,2, \dots )$ satisfies a second-order Fuchsian differential equation which satisfies the assumption of Conjecture \ref{conj} by setting $\eta =1-2z$  (see (\cite{OS,STZ}) etc.).
Thus relationships between $X_{\ell }$-Jacobi polynomials and generalized hypergeometric polynomials should be studied further.
On the other hand, several researchers including the authors in (\cite{AAMP,EKLW,Koo}) studied generalized Jacobi polynomials.
It would be interesting to consider relationship among those polynomials.\\

{\bf Acknowledgments}
The author would like to thank Professor Ryu Sasaki for fruitful discussions and valuable comments.
Thanks are due to Professor Satoshi Tsujimoto.
He also thanks Professor Alain Moussiaux for pointing out an error in an equation on an older version and referees for valuable comments.
He is supported by the Grant-in-Aid for Young Scientists (B) (No. 22740107) from the Japan Society for the Promotion of Science.

\appendix
\section{Sketch of the proof of Proposition \ref{prop:conj}}
We give a sketch of the proof of Proposition \ref{prop:conj}.

Let $\tilde{L} $ be the differential operator in Conjecture \ref{conj}.
Write
\begin{align}
& \tilde{D}  = \frac{d^N}{dz^N} + \sum _{j=0}^{N-1} v_{j} (z)  \frac{d^j}{dz^j} ,\quad  L_{\alpha , \beta , e_1 +1, \dots , e_N+1 ;\gamma , e_1, \dots , e_N} - \tilde{D} \tilde{L}=  \sum _{j=0}^{N+1} w_{j} (z)  \frac{d^j}{dz^j}
\end{align}
and impose the condition $w_j(z) =0$ $(j=0, \dots ,N+1)$ by choosing the coefficients $v_j (z) $ of $\tilde{D} $ appropriately.
The coefficients $v_{N-j} (z) $ $(j=1, \dots ,N )$ are determined recursively by the condition $w_{N+2-j} (z) =0$.
If we can show $w_1(z)=w_0(z)=0$ by using the condition (apparency of singularities) of the conjecture and by choosing the values of $e_1, \dots ,e_N$ appropriately, then the conjecture for the operator $\tilde{L}  $ is true.
We show it in the case $M=1$, $m_1 \leq 5$, the case $M=2$, $m_1+m_2 \leq 4$ and the case $M=3$, $m_1=m_2 =m_3=1$ with the aid of Maple.
Let ${\mathfrak e}_i$ be the $i$-th elementary symmetric function, i.e. ${\mathfrak e}_i = \sum _{1\leq j_1< \dots <j_i \leq N} e_{j_1} \dots e_{j_i}$.
The functions $w_0(z)$ and $w_1(z)$ are rational, and every coefficient of the numerators of $w_0(z)$ and $w_1(z)$ in the variable $z$ is a polynomial in ${\mathfrak e}_1, \dots ,{\mathfrak e}_N, \gamma, \delta, s_0, \dots ,s_M$, which is also dependent on $t_1, \dots ,t_M$.

We investigate the case $M=1$ and $m_1 \in \{ 1,2,3,4,5 \} $.
The differential operator $\tilde{L}  $ is written as $H_{[\epsilon =-m_1 ]} $.
Every coefficient of the numerators of $w_0(z)$ and $w_1(z)$ in $z$ is linear in ${\mathfrak e}_1, \dots ,{\mathfrak e}_N$.
We can solve the simultaneous equations determined by any coefficients the numerators of $w_1(z)$ in the variable ${\mathfrak e}_1, \dots ,{\mathfrak e}_N $.
The solution ${\mathfrak e}_1, \dots , {\mathfrak e}_N$ exists uniquely and ${\mathfrak e}_j$ ($j=1,\dots ,N $) are expressed as polynomials in $q, \alpha ,\beta , \gamma $ and are homolorphic in $t \in \Cplx \setminus \{ 0,1 \}$ (see Eq.(\ref{eq:M1m12e1e2}) in the case $m_1=2$).
By the way, apparency of the singularity $z=t$ is written as $P^{\sf app} (q)= 0 $, where $P^{\sf app} (q)$ is a polynomial of $q$ of degree $m_1+1 $. 
We evaluate the solution ${\mathfrak e}_1, \dots , {\mathfrak e}_N $ in $w_0(z)$.
Then we can confirm that all coefficients are vanished by using the relation $P^{\sf app} (q)= 0  $, and we obtain correctness of the conjecture in the case $M=1$ and $m_1 \leq 5 $.

We investigate the case $M\geq 2$. Set $N= m_1 + \dots +m_M$ and write
\begin{align}
& \tilde{L}  = \frac{d^2}{dz^2} +\left( \frac{\gamma }{z} +\frac{\alpha +\beta -\gamma + N +1}{z-1} - \sum _{k=1}^M  \frac{m_k}{z-t_k} \right)  \frac{d}{dz} \\
& \qquad \qquad +\frac{\alpha \beta  }{z(z-1)} + \sum _{k=1}^M \frac{ p_k }{z(z-1)(z-t_k)} .\nonumber 
\end{align}
The condition that the singularity $z= t_j$ $(j=1,\dots ,M) $ is apparent is written as $P^{\sf app} _j ( p_1 ,\dots ,p_M )= 0 $, where $P^{\sf app} _j ( p_1 ,\dots ,p_M) $ is a polynomial of $ p_1, \dots , p_M$ such that $\deg _{p_j} P^{\sf app} _j ( p_1 ,\dots ,p_M) =m_j +1$ and $\deg _{p_{j'}} P^{\sf app} _j ( p_1 ,\dots ,p_M ) \leq m_j $ $(j' \neq j)$. 
We assume that $M=2$, $(m_1, m_2)=(1,1)$, $(2,1)$, $(2,2)$, $(3,1)$ or $M=3$, $m_1=m_2=m_3=1 $.
We expand the numerator of $w_1(z)$ as ${\mathfrak c} _0 +{\mathfrak c} _1 (z-1)  + \dots +{\mathfrak c} _{N'} (z-1)^{N'}$ for some $N'$ and solve the simultaneous equations ${\mathfrak c} _0 = \dots = {\mathfrak c} _{N-1} =0$ for the variables ${\mathfrak e}_1, \dots , {\mathfrak e}_N$.
The solution ${\mathfrak e}_1, \dots , {\mathfrak e}_N$ exists uniquely and ${\mathfrak e}_j$ ($j=1,\dots ,N $) are expressed as polynomials in $p_1, \dots , p_M , \alpha ,\beta , \gamma $ and may have poles along $t_j (t_j -1)=0$ $(j=1,\dots ,M)$ and $t_j-t_{j'}=0$ $(1\leq j<j'\leq M)$.
We evaluate the solution ${\mathfrak e}_1, \dots , {\mathfrak e}_N $ in $w_1(z)$ and $w_0(z)$.
Then we can confirm that all coefficients are vanished by using the relation $P^{\sf app} _1 ( p_1 ,\dots ,p_M )=\dots  = P^{\sf app} _M ( p_1 , \dots ,p_M )= 0 $, and we obtain correctness of the conjecture in the case  $M=2$, $m_1+ m_2 \leq 4$ and $M=3$, $m_1=m_2=m_3=1 $.

\end{document}